\newtheorem{theorem}{Theorem}[section]
\newtheorem{proposition}[theorem]{Proposition}
\newtheorem{lemma}[theorem]{Lemma}
\newtheorem{note}[theorem]{Note}
\theoremstyle{definition}
\newtheorem{definition}[theorem]{Definition}
\newtheorem{corollary}[theorem]{Corollary}
\newtheorem{remark}[theorem]{Remark}
\newcommand{\forb}{\textup{forb}}
\newcommand{\avoid}{\textup{Avoid}}
\newcommand{\sym}{\textup{Sym}}
\newcommand{\Sym}{\textup{Sym}}
\newcommand{\ch}{\textup{ch}}
\newcommand{\supp}{\textup{supp}}
\newcommand{\ex}{\textup{ex}}
\newcommand{\zeroone}{\begin{bmatrix}0 & 1\end{bmatrix}}
\title{Multi-Symbol Forbidden Configurations}
\author{Keaton Ellis$^a$, Baian Liu$^a$, Attila Sali$^{b,c,}$\thanks{Research partially supported by the
    National Research, Development and Innovation Office (NKFIH)
    grant K--116769. This work is also connected to the scientific program of the "Development of quality-oriented and harmonized R+D+I strategy and functional model at BME" project, supported by the New Hungary Development Plan (Project ID: T\'AMOP-4.2.1/B-09/1/KMR-2010-0002).
  }\\
  $^a$Budapest Semesters in Mathematics\\
  $^b$Alfr\'ed R\'enyi Institute of Mathematics, Budapest\\ 
$^c$Department of Computer Science and Information Theory,\\ Budapest University of Technology and Economics}
\begin{document}
\maketitle

%
%
%

\begin{abstract}
An $r$-matrix is a matrix with symbols in $\{0,1,\dots,r-1\}$. A matrix is simple if it has no repeated columns. For a family of $r$-matrices $\mathcal{F}$, we define $\forb(m,r,\mathcal{F})$ as the maximum number of columns of an $m$-rowed, simple $r$-matrix $A$ such that $F$ is not a row-column permutation of a submatrix of $A$ for all $F \in \mathcal{F}$. We investigate the asymptotic bounds of special families of forbidden configurations $\Sym(F)$, a symmetric family of matrices based on a $(0,1)$-matrix $F$. The previously known lower bound constructions required $F$ to have no constant row. We introduce a new lower bound construction that drops this requirement and is a better bound in certain cases. We will also investigate the effects of constant rows from the upper bound perspective, including upper bounds of block matrices, matrices consisting of entirely constant rows. 

\textbf{Keywords:} forbidden configurations, (0,1)-matrices, extremal set theory, combinatorics, hypergraphs, multigraphs, extremal graph theory
\end{abstract}

\section{Introduction}
A $(0,1)$-matrix $A$ is  \emph{simple} if contains no repeated columns. For such a matrix $A$, $|A|$ denotes the number of columns in $A$. Suppose a $k \times \ell$ $(0,1)$-matrix $F$ (not necessarily simple) is  given. $F$ is said to be a \emph{configuration} of $A$, denoted by $F \prec A$, if there exists a submatrix of $A$ that is a row and column permutation of $F$. 

One natural question to ask about forbidden configurations is: given an $m$-rowed simple matrix $A$ and a configuration $F$, what is the maximum number of columns $A$ can have if $F \not \prec A$? To put this question into formal notation, define
\[
\avoid(m, F) = \{A : A \text{ is a simple matrix with $m$ rows}, F \not \prec A\},
\]
and the main function to be computed is
\[
\forb(m, F) = \max\{|A| : A \in \avoid(m, F)\}.
\]

The definitions of $\avoid$ and $\forb$ can be extended to accommodate a family of forbidden configurations $\mathcal{F}$ as follows.
\[
\avoid(m, \mathcal{F}) = \{A : A \text{ is a simple matrix with $m$ rows}, F \not \prec A \text{ for all } F\in\mathcal{F}\},
\]
\[
\forb(m, \mathcal{F}) = \max\{|A| : A \in \avoid(m, \mathcal{F})\}.
\]
Another generalization of forbidden configurations we are interested in is the extension from $(0,1)$-matrices to matrices with $r$ symbols, which we call $(0,1,...,r-1)$-matrices. We will use the notations: $\avoid(m, r, \mathcal{F})$ and $\forb(m, r, \mathcal{F})$. Analogously, an $(i,j)$-matrix is matrix of entries from $\{i,j\}$.

Let $F$ be a  $(0,1)$-matrix, define $F(i,j)$ as the $(i,j)$-matrix given by replacing the 0's in $F$ with $i$'s and the 1's in $F$ with $j$'s. Furthermore, let
\[
\sym(F) = \{F(i,j) : 0 \leq i < j \leq r-1\}.
\]	
A similar set is
\[
\mathbf{S}(F) = \{F(i,j) : 0 \leq i, j \leq r-1, i \neq j\}.
\]
Notice that $\mathbf{S}(F) = \sym(F) \cup \sym(F^c)$, where $F^c$ is the $(0,1)$-complement of $F$. 

The importance of $\sym(F)$ and $\mathbf{S}(F)$ is that for $\forb(m,r,\mathcal{F})$, if there exists a pair $(i,j) \in \binom{[r]}{2}$ such that no matrix in $\mathcal{F}$ is an $(i,j)$-matrix, then $K_m(i,j) \in \avoid(m,r,\mathcal{F})$, where $K_m$ is the simple $(0,1)$-matrix of $m$ rows and all possible $2^m$ columns. Thus $\forb(m,r,\mathcal{F}) \geq 2^m$. Otherwise, $\forb(m,r,\sym(F))$ is of polynomial order of magnitude, consequently $\forb(m,r,\mathbf{S}(F))$ is polynomial, as well, as it was shown in \cite{Multivalued}.

The following is a classical and celebrated result in Forbidden Configurations:

\begin{theorem}\label{Sauer's Thm}\cite{Sauer}\cite{Shelah}\cite{VC}
\[
\forb(m,K_k) = \binom{m}{k-1} + \binom{m}{k-2} + \cdots + \binom{m}{1} + \binom{m}{0}.
\]
\end{theorem}

%

Theorem \ref{Sauer's Thm} was extended to $r$ symbols.

\begin{theorem}\cite{Multivalued}\label{extendedsauer}
$\forb(m, r, \sym(K_k)) = \Theta(m^{(k-1)\binom{r}{2}})$.
\end{theorem}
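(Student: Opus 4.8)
The plan is to prove matching bounds of order $m^{(k-1)\binom{r}{2}}$: the lower bound via a product construction over the $\binom{r}{2}$ symbol-pairs, and the upper bound via the classical row-deletion (``standard'') induction, carried out for a parametrized family that keeps one exponent per pair.

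\emph{Lower bound.} Write $t=\binom{r}{2}$ and list the pairs $(i_1,j_1),\dots,(i_t,j_t)$ of distinct symbols. Partition the $m$ rows into blocks $R_1,\dots,R_t$ of size $\lfloor m/t\rfloor$ (padding one block with constant rows if needed). On $R_\ell$ place a simple $|R_\ell|$-rowed matrix $M_\ell$ using only the symbols $i_\ell,j_\ell$, obtained from an extremal example for $\forb(|R_\ell|,K_k)$ by relabelling $0\mapsto i_\ell$, $1\mapsto j_\ell$; by Theorem~\ref{Sauer's Thm} we may take $|M_\ell|=\binom{|R_\ell|}{0}+\cdots+\binom{|R_\ell|}{k-1}=\Theta(m^{k-1})$. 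Let $A=M_1\times M_2\times\cdots\times M_t$ be the matrix whose columns are all concatenations of one column from each $M_\ell$. Then $A$ is simple and $|A|=\prod_\ell|M_\ell|=\Theta\!\big(m^{(k-1)t}\big)$. If $K_k(i,j)\prec A$ on a row set $S$ with $|S|=k$, then on $S\cap R_\ell$ the matrix $A$ would realize every pattern in $\{i,j\}^{S\cap R_\ell}$ while using only symbols from $\{i_\ell,j_\ell\}$, forcing $S\cap R_\ell=\emptyset$ unless $\{i,j\}=\{i_\ell,j_\ell\}$; hence $S$ lies in a single block $R_{\ell_0}$, and the product structure gives $K_k(i_{\ell_0},j_{\ell_0})\prec M_{\ell_0}$, a contradiction. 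Thus $\forb(m,r,\sym(K_k))=\Omega\!\big(m^{(k-1)\binom{r}{2}}\big)$.

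\emph{Upper bound.} For a nonnegative integer vector $\vec a=(a_{ij})_{0\le i<j\le r-1}$ set $\phi_m(\vec a)=\forb\!\big(m,r,\{K_{a_{ij}}(i,j):0\le i<j\le r-1\}\big)$, so the target is $\phi_m(k,k,\dots,k)$. Take $A$ attaining $\phi_m(\vec a)$, delete its first row, and for a column $d$ on the remaining $m-1$ rows let $\binom{s}{d}$ denote the $m$-column with first entry $s$ and the rest equal to $d$, and $\mu(d)=\#\{s:\binom{s}{d}\in A\}$. Then $|A|=\sum_d\mu(d)=|\overline A|+\sum_d(\mu(d)-1)$, where $\overline A$ is the simple matrix of all $d$ with $\mu(d)\ge1$; since $\overline A\prec A$, $|\overline A|\le\phi_{m-1}(\vec a)$. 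For $i<j$ let $B_{ij}$ be the matrix of those $d$ with $\binom{i}{d},\binom{j}{d}\in A$. The key lemma is that $K_{a_{ij}-1}(i,j)\prec B_{ij}$ on a row set $S'$ would, after readjoining the deleted row, produce $K_{a_{ij}}(i,j)\prec A$ on $\{1\}\cup S'$; hence $B_{ij}$ avoids $K_{a_{ij}-1}(i,j)$, and being a configuration of $\overline A$ it also avoids all $K_{a_{pq}}(p,q)$, so $|B_{ij}|\le\phi_{m-1}(\vec a-e_{ij})$ where $e_{ij}$ is the standard basis vector at coordinate $(i,j)$. Using the elementary inequality $\mu(d)-1\le\binom{\mu(d)}{2}=\#\{(i,j):d\in B_{ij}\}$ one obtains
\[
\phi_m(\vec a)\ \le\ \phi_{m-1}(\vec a)\ +\ \sum_{0\le i<j\le r-1}\phi_{m-1}(\vec a-e_{ij}).
\]
With the base case $\phi_m(1,1,\dots,1)\le1$ (avoiding every $K_1(i,j)$ forces every row of a simple matrix to be constant), a double induction — on $\sum_{ij}a_{ij}$ and then on $m$ — gives $\phi_m(\vec a)=O\!\big(m^{\sum_{ij}(a_{ij}-1)}\big)$ (the bound being vacuous when some $a_{ij}=0$). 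In particular $\phi_m(k,\dots,k)=O\!\big(m^{(k-1)\binom{r}{2}}\big)$, which together with the lower bound proves the theorem.

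\emph{Main obstacle.} The crux is the interplay between the lifting lemma for $B_{ij}$ and the counting step that replaces the excess multiplicity $\sum_d(\mu(d)-1)$ by $\sum_{i<j}|B_{ij}|$: this is exactly what keeps the coefficient of $\phi_{m-1}(\vec a)$ equal to $1$, so that telescoping the recursion over $m$ costs only one extra power of $m$, while still honestly accounting for all $r$ possible symbols sitting above a repeated column (a naive split into $r$ children would only give the useless bound $\phi_m\le r\,\phi_{m-1}$). Verifying that each $B_{ij}$ inherits avoidance of the whole parametrized family, and pinning down the base cases of the double induction, is where the care is needed; the remaining manipulations are routine.
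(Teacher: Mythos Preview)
Your proof is correct. The paper does not give its own proof of this statement --- Theorem~\ref{extendedsauer} is simply quoted from \cite{Multivalued} --- so there is nothing to compare against at the level of detailed argument. That said, your lower bound is exactly the product construction recorded in the paper as Theorem~\ref{prod} (also cited from \cite{Multivalued}), specialized to $F=K_k$; your reasoning that a copy of $K_k(i,j)$ must live entirely in one block is correct because every row of $K_k$ (for $k\ge1$) contains both symbols.

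Your upper bound is a clean self-contained argument and is worth contrasting with the machinery developed later in the paper. The paper's general-purpose tool is the induction multigraph (Lemma~\ref{multisymbol to multicolumn}), which tracks only which symbol-pairs have been inducted on and bounds $\forb$ by $O(m^{1+\ex_q(r,\mathcal{H}_F)})$. Your parametrized function $\phi_m(\vec a)$ is finer: it remembers, for each pair $(i,j)$, exactly how much of $K_k(i,j)$ remains to be built, and the recursion $\phi_m(\vec a)\le\phi_{m-1}(\vec a)+\sum_{i<j}\phi_{m-1}(\vec a-e_{ij})$ then unwinds directly to $O(m^{\sum(a_{ij}-1)})$. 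The key step you correctly identified --- replacing $\sum_d(\mu(d)-1)$ by $\sum_d\binom{\mu(d)}{2}=\sum_{i<j}|B_{ij}|$ --- is precisely what the paper's Section~3.1 formalizes with the over-counting matrix $A'$; your observation that each $B_{ij}$ inherits avoidance of the full family (not just of $K_{a_{ij}-1}(i,j)$) is the ingredient that makes the parametrized induction close. Both approaches yield the same exponent here, but yours is more direct for this particular $F$, while the paper's multigraph viewpoint is designed to handle configurations where the inductive children are not as cleanly indexed.
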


For any $F$, there exists a $k \in \mathbb{N}^+$ such that $\forb(m,r,\sym(F)) = O(m^{(k-1)\binom{r}{2}})$. This is due to the fact that $F \prec K_k$ for some $k \in \mathbb{N}^+$ and the upper bound from Theorem \ref{extendedsauer}. 

\begin{remark}
	Like in the $(0,1)$ case, we get by reversing the order of the symbols,
	\[
	\forb(m,r,\Sym(F)) = \forb(m,r,\Sym(F^c)).
	\]
\end{remark}

In the present paper, we investigate $\forb(m,r,\sym(F))$. The organization is as follows. Section~2 contains lower bounds given by constructive methods. Section~3 deals with general (asymptotic) upper bounds, while Section~4 applies the results of Section~3 to particular forbidden configurations. Section~5 contains cases of block matrices, matrices consisting of blocks of constant matrices. Finally, Section~6 deals with possible future research directions. In what follows $r\ge 3$ is assumed. 

\section{Lower Bound Constructions}
Let $I_\ell$ denote the $\ell\times\ell$ identity matrix and $T_\ell$ denote the $\ell\times\ell$ upper triangular matrix with 1's on and above the diagonal and 0's below the diagonal.
We  use results from $(0,1)$-matrices applying the idea of direct products, that was introduced in \cite{AGS97}, where  $I_\ell$ and  $T_\ell$ were used extensively. The \emph{direct product} $A \times B$ is defined to be a matrix with every column of $A$ placed on top of every column of $B$ in every possible way. Thus if $A$ is a $m \times n$ matrix and $B$ is a $m' \times n'$ matrix, then $A \times B$ is a $(m + m') \times (n \cdot n')$ matrix. The following theorem uses the direct product to provide a construction for the lower bound of $\forb(m, r, \Sym(F))$ if $F$ is simple and has no constant row. 

\begin{theorem}{\cite{Multivalued}}\label{prod} Let $F$ be a simple $(0,1)$-matrix with no constant row. Then
\[\forb(m, r, \Sym(F)) \geq \forb\left(\frac{m}{\binom{r}{2}}, F\right)^{\binom{r}{2}}.\]
\end{theorem}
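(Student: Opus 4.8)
The plan is to exhibit an explicit large simple $r$-matrix by gluing together, via the direct product, one optimal $F$-avoiding $(0,1)$-matrix for each of the $\binom{r}{2}$ unordered pairs of symbols, devoting a separate block of rows to each pair. Assume first that $\binom{r}{2}$ divides $m$ and put $n=m/\binom{r}{2}$; the general case will follow by using blocks of sizes $\lceil m/\binom{r}{2}\rceil$ and $\lfloor m/\binom{r}{2}\rfloor$ together with the monotonicity of $\forb(\cdot,F)$ in the number of rows. For each pair $(i,j)$ with $0\le i<j\le r-1$, fix a simple $(0,1)$-matrix $A_{ij}$ with $n$ rows, with $F\not\prec A_{ij}$, and with $|A_{ij}|=\forb(n,F)$, and let $A_{ij}(i,j)$ be the $(i,j)$-matrix obtained from $A_{ij}$ by the substitution $0\mapsto i$, $1\mapsto j$. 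Set
\[
A \;=\; A_{01}(0,1)\times A_{02}(0,2)\times\cdots\times A_{r-2,\,r-1}(r-2,r-1),
\]
the direct product over all $\binom{r}{2}$ pairs in a fixed order. Then $A$ has $\binom{r}{2}\cdot n=m$ rows and $\prod_{i<j}|A_{ij}|=\forb(n,F)^{\binom{r}{2}}$ columns, so it remains to check that $A\in\avoid(m,r,\sym(F))$.

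First I would verify that $A$ is simple: if two columns of a direct product $B\times C$ coincide then their top and bottom halves coincide separately, so they arise from the same column of $B$ and the same column of $C$; an induction on the number of factors, using that each $A_{ij}(i,j)$ is simple, shows that $A$ has no repeated columns.

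Next, and this is the heart of the argument, I would show $F(i,j)\not\prec A$ for every $i<j$. Suppose not: some submatrix $A'$ of $A$ is a row--column permutation of $F(i,j)$. Call the $n$ rows of $A$ inherited from the factor $A_{kl}(k,l)$ the \emph{block-$(k,l)$ rows}; every entry of a block-$(k,l)$ row equals $k$ or $l$. Since $F$ has no constant row, every row of $F$ contains both a $0$ and a $1$, so every row of $F(i,j)$, hence every row of $A'$, contains both the symbol $i$ and the symbol $j$. A block-$(k,l)$ row can contain both $i$ and $j$ only when $\{i,j\}=\{k,l\}$, so every row chosen for $A'$ is a block-$(i,j)$ row. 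Consequently $F(i,j)$ is already a configuration of the submatrix $M$ of $A$ induced by the block-$(i,j)$ rows. But $M$ consists precisely of the columns of $A_{ij}(i,j)$, each repeated $\prod_{(k,l)\ne(i,j)}|A_{kl}|$ times; since $F$ is simple, a copy of $F(i,j)$ inside $M$ uses distinct columns of $M$, which correspond to distinct columns of $A_{ij}(i,j)$, so $F(i,j)\prec A_{ij}(i,j)$. Undoing the relabeling $i\leftrightarrow 0$, $j\leftrightarrow 1$ gives $F\prec A_{ij}$, contradicting the choice of $A_{ij}$.

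The step I expect to carry the real content — more than to be an obstacle — is the use of the two hypotheses on $F$. That $F$ has \emph{no constant row} is exactly what forces each row of an embedded copy of $F(i,j)$ to use both symbols $i$ and $j$, pinning the whole copy inside the single block handling the pair $(i,j)$; without it a constant row of $F$ could be realized in any block and the factors would no longer behave independently. That $F$ is \emph{simple} is what lets the final step pass from a configuration living in a matrix with heavily repeated columns to a configuration in its support $A_{ij}(i,j)$. The only genuinely routine point is the divisibility bookkeeping, handled as indicated above.
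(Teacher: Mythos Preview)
Your proof is correct and follows exactly the approach the paper indicates: the product construction $\prod_{0\le i<j\le r-1} A(i,j)$ with one optimal $F$-avoiding block per symbol pair. The paper merely states the construction and cites \cite{Multivalued} without working through the details; you have supplied them, correctly isolating where the hypotheses ``no constant row'' (to pin any copy of $F(i,j)$ inside a single block) and ``$F$ simple'' (to pass from the repeated-column block to $A_{ij}(i,j)$ itself) are used.
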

This lower bound is given by taking $A \in \avoid\left(\frac{m}{\binom{r}{2}}, F\right)$ such that $|A| = \forb\left(\frac{m}{\binom{r}{2}}, F\right)$. Then
\[
\prod_{0 \leq i < j \leq r - 1} A(i,j) \in \avoid(m, r, \Sym(F)).
\]
Due to the form of the construction, this is referred to as the ``product construction''.
\begin{remark}
If $F$ is not simple or has a constant row, but there exists a submatrix $F' \prec F$ that is simple and has no constant row, then we can say
\[\forb(m, r, \Sym(F)) \geq \forb(m, r, \Sym(F')) \geq \forb\left(\frac{m}{\binom{r}{2}}, F'\right)^{\binom{r}{2}}.\]
\end{remark}
This fact is due to the following lemma:

\begin{lemma}\cite{survey}
If $F' \prec F$, then $\forb(m, F') \leq \forb(m, F)$.
\end{lemma}
The next theorem provides lower bounds for forbidden configurations that are not simple or have a constant row. 
\begin{theorem}
\label{lbound}
Let $F$ be a (0,1)-matrix, and let $n_i$ denote the maximum number of $i$'s in a column of $F$, and set $n := \max\{n_0, n_1\}$. Then, if $n \geq 2$, 
\[\forb(m, r, \sym(F)) \geq \sum_{k=0}^{(n-1)(r-1)}\binom{m}{k}\sum_{\substack{k_1,...,k_{r-1}<n, \\ k_1+\cdots + k_{r-1} = k}}\binom{k}{k_1,...,k_{r-1}} = \Omega(m^{(n-1)(r-1)}).\] 
\end{theorem}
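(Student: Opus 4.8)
The plan is to exhibit an explicit simple $r$-matrix $A$ with $m$ rows that avoids every member of $\sym(F)$ and has the stated number of columns. The guiding intuition comes from Theorem~\ref{extendedsauer} and its proof idea: a column of $\sym(K_k)$ uses only two symbols $i,j$ in the rows where it is supported, so to avoid $\sym(F)$ it suffices to control, for each pair $(i,j)$, how many rows a column can devote to the ``$\{i,j\}$-pattern''. Here $F$ itself may be non-simple or have a constant row, but the relevant obstruction is governed by $n_0$ and $n_1$: an $(i,j)$-matrix $F(i,j)$ appears in $A$ only if $A$ contains, on some set of rows, a configuration forcing $n_0$ copies of symbol $i$ and $n_1$ copies of symbol $j$ within a single column together with the requisite column structure. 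So if in $A$ every column has \emph{fewer than $n$} entries equal to any \emph{fixed} non-zero symbol, then no $F(i,j)$ with $\{i,j\}\ne\{0,\text{something}\}$\dots — more carefully, I want every column to have, for each symbol $s\in\{1,\dots,r-1\}$, strictly fewer than $n$ occurrences of $s$, while symbol $0$ is unrestricted (it plays the role of the ``background'').

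Concretely, I would take $A$ to be the $m$-rowed simple $r$-matrix whose columns are exactly those columns $c\in\{0,1,\dots,r-1\}^m$ such that, for every $s\in\{1,\dots,r-1\}$, the number of rows $t$ with $c_t=s$ is at most $n-1$. Counting these columns: choose the set $S$ of rows that are non-zero, with $|S|=k$ where $0\le k\le (n-1)(r-1)$, in $\binom{m}{k}$ ways; then assign to these $k$ rows a word over $\{1,\dots,r-1\}$ in which symbol $s$ occurs $k_s$ times with each $k_s<n$ and $\sum_s k_s=k$, which can be done in $\binom{k}{k_1,\dots,k_{r-1}}$ ways summed over admissible $(k_1,\dots,k_{r-1})$. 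This reproduces exactly the claimed sum, and since the $k=(n-1)(r-1)$ term alone is $\Theta(m^{(n-1)(r-1)})$, the $\Omega$ bound follows; this is the routine part.

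The substantive step is to verify $A\in\avoid(m,r,\sym(F))$, i.e. that no $F(i,j)$ is a row--column permutation of a submatrix of $A$. Suppose for contradiction some $F(i,j)\prec A$ with $i<j$. Since $F$ has a column with $n$ entries attaining the more-frequent of the two values, $F(i,j)$ has a column in which either $i$ appears $\ge n$ times or $j$ appears $\ge n$ times; because $n\ge 2$ and $j\ge 1$, at least one of $i,j$ is a non-zero symbol, and I will argue the repeated value can be taken non-zero — this is where I must be careful: if the $n$-fold repeated value happened to be forced to be $i=0$, I would instead invoke the symmetric role of $n_0$ versus $n_1$ and the fact that the construction is \emph{not} symmetric in the symbols, so the real claim must be that the repeated value is whichever of $0,1$ realizes $n=\max\{n_0,n_1\}$, and the construction must single out that symbol. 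Assuming the construction is set up so that the ``capped'' symbols include the one realizing $n$, any submatrix copy of $F(i,j)$ inside $A$ would have a column of $A$ containing that symbol at least $n$ times, contradicting the defining property that every column has fewer than $n$ copies of it. Hence $F(i,j)\not\prec A$ for all pairs, so $A$ avoids $\sym(F)$.

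The main obstacle I anticipate is precisely this bookkeeping about \emph{which} symbol gets capped: the theorem statement is symmetric in $0$ and $1$ via $n=\max\{n_0,n_1\}$, but the construction breaks that symmetry, so I must choose the capped symbols (and, if $n=n_0$, possibly work with the $(0,1)$-complement $F^c$, noting $\sym(F)$ and $\sym(F^c)$ interact predictably) to make the argument go through. Once that alignment is pinned down, confirming that a copy of $F(i,j)$ really would force $\ge n$ repetitions of the capped symbol in one column of $A$ — and hence the contradiction — is a short combinatorial check.
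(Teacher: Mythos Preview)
Your proposal is correct and follows essentially the same approach as the paper: exhibit the simple $r$-matrix whose columns are all those in which each non-zero symbol occurs at most $n-1$ times, count them via the stated multinomial sum, and argue that no $F(i,j)$ can occur because some column of $F(i,j)$ would need $n$ copies of a capped symbol. Your worry about the $n=n_0$ case is exactly the one subtlety, and your two suggested fixes both work; the paper handles it in the second way, by capping the non-$(r-1)$ symbols instead of the non-$0$ symbols (since $i<j$ forces $i\le r-2$), while your alternative of passing to $F^c$ is equivalent via the symbol-reversal $s\mapsto r-1-s$, which gives $\forb(m,r,\sym(F))=\forb(m,r,\sym(F^c))$.
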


\begin{proof}

If $n = n_1$, we consider the total number of each non-zero symbol $j$ in a column. If, for each non-zero symbol, we have less than $n$ entries in a column, we cannot have $F(i,j)$. Therefore we can have all columns with each non-zero symbol appearing at most $n-1$ times, which provides this lower bound. A symmetric argument can be made if $n = n_0$ considering the number of  symbols not equal to $r$ in a column. 
\end{proof}

\begin{remark}
If $F$ has $k$ rows and $k\geq 3$, we have that $\forb(m,r,\Sym(F)) = \Omega(m^{(\lceil\frac{k}{2}\rceil-1)(r-1)})$ since every column has at least $\lceil\frac{k}{2}\rceil$ 0's or at least $\lceil\frac{k}{2}\rceil$ 1's.
\end{remark}

\begin{remark}
If $\forb(m, F) = O(m^k)$ and $n > k\frac{r}{2}$, Theorem \ref{lbound} provides a better asymptotic lower bound than Theorem \ref{prod}. For example, let
\[
\Gamma = \begin{bmatrix}
1 & 1 \\
1 & 0 \\
1 & 0 \\
\end{bmatrix}.
\]
Then the largest subconfiguration without constant rows is $\Gamma' = \begin{bmatrix}
1 & 0 \\ 1 & 0
\end{bmatrix}\prec\Gamma$ with $\forb(m,\Gamma')=\Theta(m)$.
Subsequently, we obtain that $\forb(m,r,\Sym(\Gamma))= \Omega(m^{\max\{2(r-1), \binom{r}{2}\}})$, so the asymptotic lower bound follows a different form for $r=3$. In fact,  $\forb(m,r,\Sym(\Gamma))= \Theta(m^{\max\{2(r-1), \binom{r}{2}\}})$ is obtained as a direct result of Theorem~\ref{1111K2}.
\end{remark}

\section{Upper Bounds}
Theorem \ref{extendedsauer} requires a large $k \in \mathbb{N}^+$ to ensure that the repeated columns are dealt with if non-simple forbidden configurations are studied. However, in most of the cases it is enough to consider the support of a configuration. 

Let $\mu(\textbf{x},F)$ be the multiplicity of column $\textbf{x}$ in matrix $F$. The \emph{support} of a matrix $F$, denoted $\supp(F)$, is the matrix containing all columns such that $\mu(\textbf{x},F) > 0$, \cite{AS14}. 
\begin{theorem}
\label{multiplicity}
Let $F$ be a $k \times \ell$ $(0,1)$-matrix, $s = |\supp(F)|$, and $\mu = \max\limits_{\textbf{x}\in F}(\mu(\textbf{x},F))$. Then
\[
\forb(m,r,\sym(F)) \leq \forb(m,r,\sym(\supp(F))) + \binom{m}{k}\binom{r}{2}(\mu-1)k!s. 
\]
\end{theorem}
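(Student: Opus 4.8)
The plan is to bound the excess columns of an extremal matrix $A \in \avoid(m,r,\sym(F))$ over $\forb(m,r,\sym(\supp(F)))$ by exhibiting a way to delete few columns from $A$ so that what remains avoids $\sym(\supp(F))$. Suppose $A$ is a simple $r$-matrix with $m$ rows and $A \in \avoid(m,r,\sym(F))$. If $A$ also avoids $\sym(\supp(F))$, then $|A| \le \forb(m,r,\sym(\supp(F)))$ and there is nothing to prove, so assume $A$ contains some $\supp(F)(i,j)$ as a configuration. The key observation is that $\supp(F)$ and $F$ have the same support by definition, so the only obstruction to upgrading a copy of $\supp(F)(i,j)$ inside $A$ to a copy of $F(i,j)$ is a lack of sufficient column multiplicity among the $k$ chosen rows: $F(i,j)$ needs up to $\mu$ copies of some column, whereas the submatrix of $A$ on those rows is simple and hence supplies only one copy of each.

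First I would set up the deletion scheme. For each copy of $\supp(F)(i,j)$ sitting on a fixed set $R$ of $k$ rows of $A$, and for each column $\mathbf{x} \in \supp(F)$ with $\mu(\mathbf{x},F) = t > 1$, we need $t$ columns of $A$ that restrict to $\mathbf{x}(i,j)$ on $R$; if $A$ contained $t$ such columns we could complete a copy of $F(i,j)$, contradicting $A \in \avoid(m,r,\sym(F))$. Hence for every choice of $R$ (there are $\binom{m}{k}$), every pair $(i,j)$ (there are $\binom{r}{2}$), and every repeated column $\mathbf{x}$, the number of columns of $A$ restricting to $\mathbf{x}(i,j)$ on $R$ is at most $\mu - 1$ (in fact at most $\mu(\mathbf{x},F)-1$, but $\mu-1$ is a safe uniform bound). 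Summing the number of ``dangerous'' columns over the $s$ support columns $\mathbf{x}$ gives at most $(\mu-1)s$ per $(R,(i,j))$ pair; but each such dangerous column, together with a choice of which of the $k!$ row-orderings realizes the embedding, is what we delete. The total count of deletions is therefore at most $\binom{m}{k}\binom{r}{2}(\mu-1)k!s$.

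Next I would verify the bookkeeping and that the residual matrix really avoids $\sym(\supp(F))$. Let $A'$ be obtained from $A$ by deleting, for every $R$, every $(i,j)$, and every repeated $\mathbf{x}\in\supp(F)$, all columns of $A$ whose restriction to $R$ equals $\mathbf{x}(i,j)$ except that we need not delete more than needed — the crude but correct statement is that deleting all at most $(\mu-1)$ of them per configuration certainly suffices, and the $k!$ factor accounts for ordered embeddings of $R$ so that no copy is missed. Now if $A'$ still contained $\supp(F)(i,j)$ on some row set $R$, then on $R$ in the original $A$ there were, for each repeated $\mathbf{x}$, at least $\mu(\mathbf{x},F)$ columns restricting to $\mathbf{x}(i,j)$ — because we deleted strictly fewer than that many — and hence $F(i,j) \prec A$, a contradiction. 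Therefore $A' \in \avoid(m,r,\sym(\supp(F)))$, so $|A'| \le \forb(m,r,\sym(\supp(F)))$, and $|A| = |A'| + (\text{number of deleted columns}) \le \forb(m,r,\sym(\supp(F))) + \binom{m}{k}\binom{r}{2}(\mu-1)k!s$.

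I expect the main obstacle to be the careful accounting in the deletion step: ensuring that a single column of $A$ is not undercounted (it may be dangerous for several choices of $R$, $(i,j)$, $\mathbf{x}$, which is fine — the bound is an overcount) while simultaneously guaranteeing that after deletion every potential copy of $\supp(F)(i,j)$ has been destroyed. The cleanest way to make this airtight is to argue contrapositively as above — assume a surviving copy in $A'$ and derive a copy of $F(i,j)$ in $A$ — rather than trying to track exactly which columns were removed. The factors $k!$ and $s$ are deliberately generous so that the argument does not need to be tight; the point is only that the discrepancy between $\forb(m,r,\sym(F))$ and $\forb(m,r,\sym(\supp(F)))$ is of polynomial order $O(m^k)$, which is typically lower than the order of $\forb(m,r,\sym(\supp(F)))$ itself.
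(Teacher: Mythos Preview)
Your deletion approach is different from the paper's, which instead uses F\"uredi's iterative extraction: starting from $\forb(m,r,\sym(\supp(F)))+1$ columns one finds a copy of some $\supp(F)(i_1,j_1)$, sets aside those $s$ columns, restores the count, finds another copy, and so on; after $\binom{m}{k}\binom{r}{2}(\mu-1)k!+1$ extracted copies the pigeonhole principle forces $\mu$ of them onto the same ordered $k$-tuple of rows and the same pair $(i,j)$, producing $\mu\cdot\supp(F)(i,j)\succ F(i,j)$. A deletion argument of the kind you outline can also be made to work and is arguably cleaner, but your execution contains a genuine error.

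The false step is the sentence ``Hence for every choice of $R$, every pair $(i,j)$, and every repeated column $\mathbf{x}$, the number of columns of $A$ restricting to $\mathbf{x}(i,j)$ on $R$ is at most $\mu-1$.'' Avoiding $F(i,j)$ does \emph{not} bound the count for every $\mathbf{x}$ individually; it only guarantees that \emph{some} $\mathbf{x}\in\supp(F)$ has fewer than $\mu(\mathbf{x},F)$ extensions on $R$. For instance, with $F=\begin{bmatrix}0&0&1\\0&0&1\end{bmatrix}$ (so $\mu=2$) one can easily build a simple $A$ avoiding $F$ in which some pair of rows sees $\begin{bmatrix}0\\0\end{bmatrix}$ many times but $\begin{bmatrix}1\\1\end{bmatrix}$ never. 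Your contrapositive in the third paragraph inherits this flaw: from ``a surviving column restricts to $\mathbf{x}(i,j)$'' and ``we deleted at most $\mu-1$ for this triple'' you cannot conclude that $A$ had at least $\mu$ such columns --- perhaps $A$ had two and you deleted none.

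The fix is to make the deletion rule threshold-based rather than uniform: for each ordered $k$-tuple $R$, each pair $(i,j)$, and each $\mathbf{x}\in\supp(F)$, delete \emph{all} columns of $A$ whose restriction to $R$ equals $\mathbf{x}(i,j)$ \emph{provided} there are fewer than $\mu(\mathbf{x},F)$ of them, and delete none otherwise. This costs at most $\binom{m}{k}k!\cdot\binom{r}{2}\cdot s\cdot(\mu-1)$ columns in total. Now if $A'$ still contains $\supp(F)(i,j)$ on some ordered $R$, then for every $\mathbf{x}$ a witnessing column survived, so the threshold was not met for any triple $(R,(i,j),\mathbf{x})$; hence $A$ already had at least $\mu(\mathbf{x},F)$ columns restricting to $\mathbf{x}(i,j)$ for every $\mathbf{x}$, and therefore $F(i,j)\prec A$, a contradiction. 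With this correction your argument goes through and matches the stated bound.
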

\begin{proof}
To prove this, we extend an old idea of F\"uredi's to $r$-symbols. We first begin with $\forb\left(m,r,\Sym\left(\supp(F)\right)\right)+1$ columns. From this, we know that we have a $\supp(F(i_1,j_1))$. Let us permute these $s$ columns containing $\supp(F(i_1,j_1))$ to the first $s$ columns. If we disregard these columns, and add $s$ more to the end of the matrix, we will again have some $\supp(F(i_2,j_2))$. If we add $[\binom{m}{k}\binom{r}{2}(\mu-1)k!]s + 1$ columns instead of just $s$, then we will force at least $\mu$ number of some $\supp(F(i,j))$ in the same $k$ rows, each with the correct row permutation. Since $F \prec \mu \cdot \supp(F)$, we prove our result. 
\end{proof}
This theorem tells us that adding repeated columns in $F$ do not affect the asymptotic bounds of $\forb(m,r,\Sym(F))$ except for a few small cases. In other words, for the purpose of studying asymptotic bounds of $\forb(m,r,\Sym(F))$, we only need to look at $\forb(m,r,\Sym(\supp(F)))$

\begin{corollary}\label{cor:supp-enough}
Let $r\geq 4$. If $\supp(F) \not\prec \zeroone$ or $\supp(F) \not\prec \begin{bmatrix}
0 \\ 1\end{bmatrix}$, then \[\forb(m,r,\sym(F)) = \Theta(\forb(m,r,\sym(\supp(F)))).\]
\end{corollary}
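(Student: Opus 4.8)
The plan is to derive Corollary~\ref{cor:supp-enough} as a direct consequence of Theorem~\ref{multiplicity}, so the main work is to verify that the error term $\binom{m}{k}\binom{r}{2}(\mu-1)k!s$ is asymptotically dominated by $\forb(m,r,\sym(\supp(F)))$ under the stated hypothesis. First I would observe that one inequality is trivial: since $\supp(F) \prec F$, Theorem~\ref{multiplicity} (or the monotonicity lemma applied after passing to $\sym$) gives $\forb(m,r,\sym(\supp(F))) \leq \forb(m,r,\sym(F))$, so we immediately have $\forb(m,r,\sym(F)) = \Omega(\forb(m,r,\sym(\supp(F))))$. The content is the matching upper bound, and for that I would invoke Theorem~\ref{multiplicity} to write $\forb(m,r,\sym(F)) \leq \forb(m,r,\sym(\supp(F))) + C\binom{m}{k}$ where $C = \binom{r}{2}(\mu-1)k!s$ is a constant depending only on $F$ and $r$. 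So it suffices to show that $\binom{m}{k} = O(\forb(m,r,\sym(\supp(F))))$, i.e. that $\forb(m,r,\sym(\supp(F)))$ grows at least like $m^k$.

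The key step is therefore a lower bound of the form $\forb(m,r,\sym(\supp(F))) = \Omega(m^k)$, where $k$ is the number of rows of $F$ (equivalently of $\supp(F)$). This is where the hypothesis $r \geq 3$ and $F \not\prec \zeroone$ or $F \not\prec \begin{bmatrix}0 \\ 1\end{bmatrix}$ enters. The condition $F \not\prec \zeroone$ means $F$ (hence $\supp(F)$) has a row that is not constant, i.e. some row contains both a $0$ and a $1$; dually $F \not\prec \begin{bmatrix}0\\1\end{bmatrix}$ means $F$ has a column containing both a $0$ and a $1$. I would split into these two cases. In the case where $\supp(F)$ has a non-constant row, I would use Remark~\ref{rem:lbound}: there is a nontrivial simple $F' \prec \supp(F)$ with no constant row (take two columns of $\supp(F)$ differing in that row, restricted appropriately — or more simply observe that $\forb(m,r,\sym(\supp(F))) \geq \forb(m,r,\sym(F'))$ for the $1 \times 2$ matrix $F' = \zeroone$-like configuration living in that row), giving $\forb(m,r,\sym(\supp(F))) \geq \forb(m/\binom{r}{2}, F')^{\binom{r}{2}}$; since $\forb(n,F') = \Theta(n)$ for a single non-constant column/row configuration and $\binom{r}{2} \geq 3$, this is $\Omega(m^{\binom{r}{2}})$, and I would need $\binom{r}{2} \geq k$... which is not automatic, so more care is needed. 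In the case where $\supp(F)$ has a column containing both symbols, Theorem~\ref{lbound} applies with $n = \max\{n_0,n_1\} \geq 1$; if in fact $n \geq 2$ we get $\Omega(m^{(n-1)(r-1)})$.

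The main obstacle — and the point I would scrutinize most carefully — is that the naive bounds above ($\Omega(m^{\binom{r}{2}})$ or $\Omega(m^{(n-1)(r-1)})$) need not reach $m^k$ when $F$ has many rows but little internal structure, so a cruder but more robust lower bound is needed. The right tool is to bound $\forb(m,r,\sym(\supp(F)))$ below by $\forb(m,r,\sym(\mathbf{1}_k))$ or similar, using that any $A \in \avoid(m,r,\sym(F))$ can be built from a $(0,1)$-matrix avoiding a configuration on $k$ rows; concretely, I would use the fact that a $k \times 1$ configuration that is not constant is contained in $\supp(F)$ after deleting repeated rows only if $k$ is small, so instead I would argue directly: since $r \geq 3$, pick three symbols and for each of the $\binom{3}{2}$ pairs take a large $(0,1)$-matrix on $m/\binom{3}{2}$ rows avoiding the relevant configuration, and check by a counting/VC-type argument that the product avoids $\sym(\supp(F))$ while having $\Omega(m^{k})$ columns whenever $\supp(F)$ fails to embed in a single row or single column. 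I expect the bookkeeping of exactly which small $(0,1)$-configuration to forbid in each coordinate block — so that the product genuinely avoids every $\supp(F)(i,j)$ and has the claimed column count — to be the delicate part, and I would present it as a short lemma feeding into the corollary.
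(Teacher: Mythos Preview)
Your overall strategy is exactly right and matches the paper: the $\Omega$ direction is immediate from $\supp(F)\prec F$, and for the $O$ direction one invokes Theorem~\ref{multiplicity} and reduces to showing $\binom{m}{k}=O(\forb(m,r,\sym(\supp(F))))$, i.e.\ that $\forb(m,r,\sym(\supp(F)))=\Omega(m^k)$. Where you go off track is in how to obtain that $\Omega(m^k)$ lower bound.

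The key observation you are missing is a one-line pigeonhole: in any $k$-rowed $(0,1)$-matrix, every column contains at least $\lceil k/2\rceil$ copies of some symbol, so the parameter $n=\max\{n_0,n_1\}$ in Theorem~\ref{lbound} automatically satisfies $n\ge\lceil k/2\rceil$. Once some column has two equal entries (equivalently $n\ge 2$), Theorem~\ref{lbound} applied to $\supp(F)$ therefore gives $\forb(m,r,\sym(\supp(F)))=\Omega(m^{\lceil k/2\rceil(r-1)})$, and since $r\ge 3$ we have $\lceil k/2\rceil(r-1)\ge 2\lceil k/2\rceil\ge k$. That is the whole argument in the generic case; no product-of-blocks construction or bespoke lemma is needed. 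The only residual case is when no column of $\supp(F)$ has two equal entries, which forces $k\le 2$; the paper handles the nontrivial instance $\supp(F)=I_2$ separately via Theorem~\ref{prod}, which already gives $\Omega(m^{\binom{r}{2}})=\Omega(m^2)$.

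Your case split (non-constant row versus non-constant column) does not line up with what Theorem~\ref{lbound} actually needs, and the alternative construction you sketch at the end is both unnecessary and not clearly well-defined. Replace that paragraph with the pigeonhole observation above and the $I_2$ side case, and the proof is complete.
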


\begin{proof}
Clearly, $\forb(m,r,\sym(F)) = \Omega(\forb(m,r,\sym(\supp(F))))$.

In order to show $\forb(m,r,\sym(F)) = O(\forb(m,r,\sym(\supp(F))))$, we need $\binom{m}{k}\binom{r}{2}(\mu-1)k!s = O(\forb(m,r,\sym(\supp(F))))$ from Theorem \ref{multiplicity}, where $k$ is the number of rows in $F$, $\mu$ is the maximum column multiplicity of $F$, and $s = |\supp(F)|$. 

We see that $\binom{m}{k}\binom{r}{2}(\mu-1)k!s = O(m^k)$. If $k\ge 3$, then we can use Theorem \ref{lbound} to get $\forb(m,r,\sym(\supp(F))) = \Omega(m^{(\lceil\frac{k}{2}-1)\rceil(r-1)}) = \Omega(m^k)$ assuming $r \geq 4$. Thus $\binom{m}{k}\binom{r}{2}(\mu-1)k!s = O(\forb(m,r,\sym(\supp(F))))$, as desired. Therefore, $\forb(m,r,\sym(F)) = \Theta(\forb(m,r,\sym(\supp(F))))$.

Now if $k=2$, then either we have two 0's or two 1's in a column of $\supp(F)$ and we apply Theorem~\ref{lbound} again using that $r-1\ge k$. Otherwise,  $\supp(F) = I_2$ and we use Theorem \ref{prod} to get $\forb(m,r,\sym(\supp(F))) = \Omega(m^{\binom{r}{2}})$. Thus we do get $\binom{m}{k}\binom{r}{2}(\mu-1)k!s = O(\forb(m,r,\sym(\supp(F))))$ from Theorem \ref{multiplicity} since $\binom{m}{k}\binom{r}{2}(\mu-1)k!s = O(m^2)$ and $\forb(m,r,\sym(\supp(F))) = \Omega(m^2)$. 
\end{proof}
The cases $\supp(F) \prec \zeroone$ and $\supp(F) \prec \begin{bmatrix}
0 \\ 1
\end{bmatrix}$ will be treated in Section \ref{sec:blocks}. 

\begin{note}
For $r=3$ $\forb(m,r,\sym(\supp(F)))  = \Omega(m^k)$ is not necessarily true as shown in Theorem~\ref{thm:4.3} and Theorem~\ref{onesontopofzeroes} with $p_0=p_1$.
However, as soon as $F$ has a column with at least $\lfloor\frac{k}{2}\rfloor+1$ 0's or 1's, then Theorem~\ref{lbound} gives the right lower bound. So the question remains what happens if $r=3$ and $F$ is \emph{balanced}, that is the number of 0's and 1's differ by at most one in each column?
\end{note}
The following is a further extension to Theorem \ref{extendedsauer} using Theorem \ref{multiplicity}:
\begin{theorem}\label{extendedextendedsauer}
$
\forb(m,r,\Sym(s \cdot K_k)) = \Theta(m^{(k-1)\binom{r}{2}}). 
$
\end{theorem}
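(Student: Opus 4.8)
The plan is to sandwich $\forb(m,r,\Sym(s\cdot K_k))$ between two bounds that both have order $m^{(k-1)\binom{r}{2}}$. For the lower bound, note that $K_k \prec s\cdot K_k$, so $\Sym(K_k)$ is, in the obvious sense, ``contained'' in $\Sym(s\cdot K_k)$; more precisely, any $A$ avoiding all members of $\Sym(s\cdot K_k)$ avoids all members of $\Sym(K_k)$ is the wrong direction, so instead I would argue directly: the support of $s\cdot K_k$ is $K_k$, which is simple with no constant row (for $k\geq 2$), so by Remark~\ref{rem:lbound} (the product construction applied to $\supp(s\cdot K_k)=K_k$) we get $\forb(m,r,\Sym(s\cdot K_k)) \geq \forb(m,r,\Sym(K_k)) \geq \forb(m/\binom{r}{2},K_k)^{\binom{r}{2}} = \Omega(m^{(k-1)\binom{r}{2}})$, using Theorem~\ref{Sauer's Thm} for the single-symbol term. (Alternatively one could invoke Theorem~\ref{extendedsauer} directly for the lower bound on $\forb(m,r,\Sym(K_k))$.)

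For the upper bound, I would apply Theorem~\ref{multiplicity} with $F = s\cdot K_k$. Here $\supp(F) = K_k$, the number of rows is $k$, the maximum column multiplicity is $\mu = s$, and $|\supp(F)| = 2^k$. Theorem~\ref{multiplicity} then gives
\[
\forb(m,r,\Sym(s\cdot K_k)) \leq \forb(m,r,\Sym(K_k)) + \binom{m}{k}\binom{r}{2}(s-1)k!\,2^k.
\]
The second term is $O(m^k)$ with the constant depending only on $r,s,k$. By Theorem~\ref{extendedsauer}, $\forb(m,r,\Sym(K_k)) = \Theta(m^{(k-1)\binom{r}{2}})$, so it remains to check that the error term $O(m^k)$ is dominated by $m^{(k-1)\binom{r}{2}}$. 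This holds precisely when $(k-1)\binom{r}{2} \geq k$; since the statement implicitly assumes $r\geq 2$ (so $\binom{r}{2}\geq 1$) one has $(k-1)\binom{r}{2}\geq k-1$, and for $r\geq 2$, $k\geq 2$ one checks $(k-1)\binom{r}{2}\geq k$ except in the single borderline case $k=2$, $r=2$, where both exponents equal $1$ and the bound still reads $\Theta(m)$. So in all cases the error term is $O(m^{(k-1)\binom{r}{2}})$ and the two bounds match.

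The main obstacle, such as it is, is bookkeeping rather than mathematics: one must make sure the hypotheses of the cited results genuinely apply — in particular that $K_k$ really is the support of $s\cdot K_k$ and has no constant row so that Theorem~\ref{prod}/Remark~\ref{rem:lbound} is available, and that the polynomial error term from Theorem~\ref{multiplicity} is of strictly smaller (or equal) order than the main term for every relevant $(k,r)$. For $k\geq 2$ and $r\geq 2$ this is immediate once one writes out $(k-1)\binom{r}{2}$ versus $k$; the only subtlety is the degenerate small cases, which one should either dispatch explicitly or exclude by the standing assumption $r\geq 3$ used elsewhere in this section (e.g.\ Corollary~\ref{cor:supp-enough}). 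Combining the $\Omega$ and $O$ bounds yields $\forb(m,r,\Sym(s\cdot K_k)) = \Theta(m^{(k-1)\binom{r}{2}})$, as claimed.
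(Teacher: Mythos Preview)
Your proposal is correct and follows essentially the same approach as the paper: the lower bound via the product construction (Theorem~\ref{prod}) applied to $K_k=\supp(s\cdot K_k)$, and the upper bound via Theorem~\ref{multiplicity} together with the inequality $(k-1)\binom{r}{2}\geq k$. Your treatment is in fact slightly more careful about the small cases than the paper's one-line proof, though the exposition could be tightened (the ``wrong direction'' aside is unnecessary once you invoke $K_k\prec s\cdot K_k$ and Lemma~2.3 directly).
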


\begin{proof}
The lower bound is given by Theorem \ref{prod} using $K_k \prec s \cdot K_k$, while the upper bound is given by Theorem \ref{multiplicity} since $(k-1) \binom{r}{2} \geq k$ for all $r \geq 3$ and all $k$. 
\end{proof}
\begin{remark}\label{rem:k-1 case}
If $F$ is a $k \times l$ $(0,1)$-matrix with no constant row, and $\forb(m,\supp(F)) = \Theta(m^{k-1})$, then $\forb(m,r,\Sym(F)) = \Theta(m^{(k-1)\binom{r}{2}})$. This follows from Theorem \ref{prod} and Theorem \ref{extendedextendedsauer}. 
\end{remark}

\subsection{Standard Induction}
A basic technique used to find an upper bounds for the $\forb$ function of a given matrix (or family) is standard induction, introduced in \cite{Induction}. Here, we show a known inequality bounding the growth of $\forb$ with respect to $m$.

Let a $k \times \ell$ (0,1)-matrix $F$ be given. If $A$ is such that $A \in \avoid(m,F)$ and $|A| = \forb(m,F)$, then let us consider some row $t$ of $A$. By permuting $t$ to the top row and then permuting the columns, we get the following permutation of $A$:
\[
A = \begin{bmatrix}
0 & \dots & 0 & 1 & \dots & 1\\
B_t & \dots & C_t & C_t & \dots & D_t
\end{bmatrix}
\]
where $C_t$ denotes the columns that appear both under 0 and 1, while $B_t$ and $D_t$ are columns that are only under a  single symbol. 

$C_t$ needs to avoid a simpler family of matrices than $A$ does. To describe exactly what the simpler family of matrices is, we need the following notation:
\begin{definition}
Let $F$ be a (0,1)-matrix. Denote
\[
\ch(F) = \{ G \mid G \prec F \prec (G \times \begin{bmatrix}
0 & 1
\end{bmatrix}), \forall H : H \prec G, H \neq G \implies F \not\prec (H \times \begin{bmatrix}
0 & 1
\end{bmatrix})\}.
\]
And its repeated application:
\[
\ch^n(F) = \bigcup_{G \in \ch^{n-1}(F)} \ch(G), \textup{ where } \ch^1(F) = \ch(F).
\]
\end{definition}

\begin{remark}
$\forb(m,F) \leq \forb(m-1,\ch(F))+\forb(m-1,F)$
\end{remark}
Assume that $A\in\avoid(m,F)$ with $|A|=\forb(m,f)$.
Note that we have 
$\begin{bmatrix}B_t\ C_t\ D_t\end{bmatrix} \in \avoid(m-1,F)$.
It is clear to see now that
\begin{equation*}
\forb(m,F)=|A|=|C_t|+|\begin{bmatrix}
B_t\ C_t\ D_t
\end{bmatrix}| \leq \forb(m-1,\ch(F))+\forb(m-1,F).
\end{equation*}
Therefore if we are able to limit the size of $\forb(m-1,\ch(F))$, we can get an upper bound for $\forb(m,F)$ using induction.

When we extend this concept to $r$ symbols, we may possibly have repeated columns under multiple symbols, let $B_t^{i_1,i_2,\dots,i_b}$ denote the collection of columns appearing under exactly each of the symbols $\{i_1,i_2,\dots,i_b\}$ for some subset of our $r$ symbols. We still take $A \in \avoid(m,r,\mathcal{F})$ with $|A| = \forb(m,r,\mathcal{F})$ and a row $t$, and have the matrix:

\[A =
\begin{bmatrix}
0 & \cdots & \cdots & \cdots & \cdots & \cdots & 0 & \cdots & r-1 & \cdots & r-1 \\ B_t^0 & B_t^{0,1} & \cdots & B_t^{0,r-1} & B_t^{0,1,2} & \cdots & B_t^{0,1,\cdots, r-1} & \cdots & B_t^{r-1} & \cdots B_t^{0,1,\cdots, r-1} 
\end{bmatrix}.
\]
Note, that for $B_t^0=B_t$, $B_t^{0,1} = C_t$ and $B_t^1=D_t$ in case of $r=2$.
To make working with the standard induction decomposition easier in $r$ symbols, consider the a slightly larger matrix by changing the definition of the repeated columns. Let $C_t^{i,j}$ be the repeated columns under the symbols $i$ and $j$ in row $t$. 

\[
A' =
\begin{bmatrix}
0 & \cdots & \cdots & 0 & \cdots & r-1 & \cdots & \cdots & r-1 \\
B_t^0 & C_t^{0,1} & \cdots & C_t^{0,r-1} & \cdots & B_t^{r-1} & C_t^{0,r-1} & \cdots & C_t^{r-2,r-1}
\end{bmatrix}.
\]
That is, $A'$ is composed of columns of $\begin{bmatrix}i\\B_t^i\end{bmatrix}$ for all $i=0,1,\ldots ,r-1$ and $\begin{bmatrix}i&j\\C_t^{i,j}&C_t^{i,j}\end{bmatrix}$ for all $0\le i<j\le r-1$.
Note that we do over count here, as a column that is shared by 0, 1, and 2 will be counted in all three $C_t^{0,1}$, $C_t^{0,2}$ and $C_t^{1,2}$. However, we do not worry about the growth of $B_t^{i,j,k}$, as $B_t^{i,j,k} \prec C_t^{i,j}$. Therefore if we restrict $C_t^{i,j}$, then we are restricting columns appearing under multiple symbols as well, and will be able to restrict $A'$. Since $|A'| \geq |A| = \forb(m,r,\mathcal{F})$, by obtaining an upper bound for $A'$ we also obtain one for $\forb(m,r,\mathcal{F})$. 

\begin{remark} To make more formal that the growth of $\forb(m,r,\mathcal{F})$ is bounded by the size of the repeated columns, consider
	\begin{align*}
	\forb(m,r,\mathcal{F}) = |A| \leq |A'| &= \big(|B_t^0|+ |B_t^1| + \cdots + |B_t^{r-1}|\big) + 2\big(|C_t^{0,1}| + |C_t^{0,2}| + \cdots + |C_t^{r-2,r-1}|\big) \\
	& \leq \forb(m-1,r,\mathcal{F}) + 2\big(|C_t^{0,1}| + |C_t^{0,2}| + \cdots + |C_t^{r-2,r-1}|\big).
	\end{align*}
	
Thus,  
\[\forb(m,r,\mathcal{F}) - \forb(m - 1,r,\mathcal{F}) \leq 2\big(|C_t^{0,1}| + |C_t^{0,2}| + \cdots + |C_t^{r-2,r-1}|\big), \]
so if $|C_t^{0,1}| , |C_t^{0,2}| , \dots , |C_t^{r-2,r-1}| \in O(f(m))$ for some monotone increasing function $f\colon \mathbb{N}\rightarrow\mathbb{N}$, then $\forb(m,r,\mathcal{F}) = O(mf(m))$. 
\end{remark}

\subsubsection{Multiple Inductions}\label{multiinductions}
Since $C_t^{i,j}$ needs to avoid $\mathcal{F}' = \{F'|F \prec F'\times \begin{bmatrix}i&j\end{bmatrix} \, \forall F \in \mathcal{F}\}$, in addition to inducting once, we can induct multiple times until we reach a forbidden family of matrices $\mathcal{F}'$ such that $\forb(m,r,\mathcal{F}')$ is already known or can be calculated. In order to talk about this idea, we need to introduce some notation.

\begin{definition}
	Let $A_F^{(ij)}(t)$ be the repeated columns of $A\in \avoid(m,r,\sym(F))$ with $|A| = \forb(m,r,\sym(F))$ under the symbols $i$ and $j$ in row $t$. This corresponds to $C_t^{ij}$ in the previous section.
	
	Now we recursively define $A_F^{(i_1j_1)(i_2j_2)\cdots(i_nj_n)}(t_1,t_2,...,t_n)$ as the repeated columns of\\ $A_F^{(i_1j_1)(i_2j_2)\cdots(i_{n-1}j_{n-1})}(t_1,t_2,...,t_{n-1})$ under the symbols $i_n$ and $j_n$ in row $t_{n-1}$. 
	
	For brevity, we will sometimes shorten $A_F^{(i_1j_1)(i_2j_2)\cdots(i_nj_n)}(t_1,t_2,...,t_n)$ to $A_F^{(i_1j_1)(i_2j_2)\cdots(i_nj_n)}$, where the rows are assumed to be chosen beforehand.
\end{definition}

Given $t_1, t_2, ... , t_n$, if we can bound $|A_F^{(i_1j_1)(i_2j_2)\cdots(i_nj_n)}(t_1,t_2,...,t_n)| = O(f(m))$ for all possible sequences of symbol pairs $(i_1j_1),(i_2j_2),...,(i_nj_n)$, then $\forb(m,r,\Sym(F)) = O(m^nf(m))$. This is due to the fact that the standard induction was done $n$ times, and backtracking the recursion yields an extra factor of $m^n$. 

\subsubsection{"Simplifying" Multi-Symbols with the Induction Multigraph}
It is also useful to visualize this repeated induction as a multigraph of the inductions we have done. Because we do not distinguish between a simple graph and a multigraph in this paper, we use the term ``graph'' to refer to both of them. We first want to turn our sequence of symbol pairs into a graph. 

\begin{definition}
	Let $\textup{Graph}((i_1j_1),(i_2j_2),...,(i_nj_n))$ denote the graph with\\ $\bigcup_{k=1}^n\{i_k, j_k\}$ as vertices and multiset $\{i_1,j_1\},\{i_2,j_2\},...,\{i_n,j_n\}$ as edges. 
\end{definition}

The language of graphs is useful in this context because certain subsequences of symbol pairs contained in the sequence of symbol pairs in $A_F^{(i_1j_1)(i_2j_2)\cdots(i_nj_n)}(t_1,t_2,...,t_n)$ allow us to automatically bound $|A_F^{(i_1j_1)(i_2j_2)\cdots(i_nj_n)}(t_1,t_2,...,t_n)|$. 
 
\begin{definition} For a given matrix $F$, define the family of forbidden submultigraphs as 
	\begin{align*}
\mathcal{H}_F = &\, \{G :\ \forall t_1,...,t_n \text{ distinct}, \forall A_F \in \avoid(m,r,\sym(F)), \\&\, \text{if } \exists (k_1, \ell_1), (k_2, \ell_2), ..., (k_q, \ell_q) \text{ a subsequence of } (i_1j_1),(i_2j_2),...,(i_nj_n)\\&\, \text{such that } \textup{Graph}((k_1, \ell_1), (k_2, \ell_2), ..., (k_q, \ell_q)) \cong G, \\&\,
\text{then }  |A_F^{(i_1j_1)(i_2j_2)\cdots(i_nj_n)}(t_1,t_2,...,t_n)| = O(1) \}.		
\end{align*}
\end{definition}

 We note that this $\mathcal{H}_F$ is infinite, as if some $G$ is in $\mathcal{H}_F$, then any supergraph of $G$ is also. We want to determine $\mathcal{H}_F$ as completely as possible to help us bound $\forb(m,r,\Sym(F))$. 

\begin{definition}
	We say that a matrix is $p$-simple if it has maximum column multiplicity of $p$. For a simple graph $G$, $p\cdot G$ denotes the $p$-simple graph $G'$ obtained by replacing each edge of $G$ by $p$ parallel edges.
\end{definition}

The first forbidden subgraph result is based on the $p$-simplicity of $F$.

\begin{lemma}\label{klog2p}
Let $F$ be a $k \times \ell$, $p$-simple, $(0,1)$-matrix, then $\textup{Graph}(( k+\lceil\log_2(p)\rceil)\cdot (i j))$ is in $\mathcal{H}_F$. 
\end{lemma}

\begin{proof}
Let $A_F \in \avoid(m,r,\Sym(F))$ and $q=k+\lceil\log_2(p)\rceil$. Consider $A_F^S$ with $q\cdot (i j)$ a subsequence of $S$. Suppose for a contradiction that $|A_F^S| > 0$, then
\[
F(i,j) \prec p\cdot K_k(i,j) \prec
\begin{bmatrix}
i&j
\end{bmatrix}^{k+\lceil\log_2(p)\rceil} \prec
\prod_{(xy) \in S} \begin{bmatrix}
x&y
\end{bmatrix} \prec A_F,
\]
which is a contradiction. This implies then that $|A_F^S| = 0 = O(1)$, so $\textup{Graph}(q\cdot (i j)) \in \mathcal{H}_F$. 
\end{proof}

The following is based on a definition from Brown and Simonovits \cite{BS99}.
\begin{definition}
Let us have a family of graphs $\mathcal{F}$. Let $\mathcal{G}_{n,q}$ be the set of all multigraphs on $n$ vertices with maximum edge multiplicity $q$ and let $H \prec G$ denote that $H$ is a submultigraph of the multigraph $G$. Then we define
\[
\ex_q(n,\mathcal{F}) = \max\{|E(G)| : G\in \mathcal{G}_{n,q} \text{ and }\forall F \in \mathcal{F},  \not\exists H\prec G, H\cong F\}.
\]
This takes the maximum number of edges of a graph that does not contain any $F \in \mathcal{F}$ as a subgraph. 
\end{definition}
\begin{lemma}
\label{multisymbol to multicolumn}
Let $F$ be a $k \times \ell$, $p$-simple, $(0,1)$-matrix, and let $q = k+\lceil\log_2(p)\rceil-1$. Then \[\forb(m,r,\sym(F)) = O\big(m^{1+\ex_{q}(r,\mathcal{H}_F)}\big).\]
\end{lemma}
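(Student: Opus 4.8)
The plan is to run the standard multi-symbol induction from Section~3, tracking the number of times we can ``go down a level'' along each edge of the induction multigraph before the associated repeated-column matrix is forced to be empty, and to read off the resulting polynomial exponent from an extremal multigraph function. First I would set up the induction exactly as in the ``over counting matrix'' discussion: starting from $A \in \avoid(m,r,\Sym(F))$ with $|A| = \forb(m,r,\Sym(F))$, picking a row $t$, and bounding $|A| \le |A'|$, where $A'$ splits into the columns $\begin{bmatrix}i\\ C_t^i\end{bmatrix}$ (each of which lies in $\avoid(m-1,r,\Sym(F))$, contributing the ``main'' inductive term) and the shared columns $\begin{bmatrix}i & j\\ C_t^{i,j} & C_t^{i,j}\end{bmatrix} = A_F^{(ij)}$. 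Iterating this, after choosing a sequence of edges $(i_1j_1),\dots,(i_nj_n)$ we land on $A_F^{(i_1j_1)\cdots(i_nj_n)}$, whose induction multigraph is the multigraph $G$ on $\{0,\dots,r-1\}$ with those edges; by the remark in Section~\ref{multiinductions} the order of the edges is irrelevant, so the level-$n$ contribution depends only on $G$.

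Next I would invoke Lemma~\ref{klog2p}: as soon as the multigraph $G$ contains an edge of multiplicity $k + \lceil \log_2(p)\rceil$, the matrix $A_F^G$ is empty. Equivalently, every multigraph $G$ that can arise as a genuine (nonempty) induction multigraph has maximum edge multiplicity at most $q = k + \lceil\log_2(p)\rceil - 1$, so $G \in \mathcal{G}_{r,q}$. Moreover, by the definition of $\mathcal{H}_F$, whenever $G$ contains some $H \in \mathcal{H}_F$ as a submultigraph we have $|A_F^G| \le c_{F,G}$, a constant; so along any branch of the induction tree, the first time the accumulated multigraph picks up a copy of a member of $\mathcal{H}_F$ we may stop, having paid only a constant. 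The depth of the induction tree along each branch is therefore bounded by the largest number of edges of a multigraph in $\mathcal{G}_{r,q}$ that is still $\mathcal{H}_F$-free, which is exactly $\ex_q(r,\mathcal{H}_F)$ by definition. Unwinding the recursion $\forb(m,r,\Sym(F)) \le (\text{branching factor}) \cdot \forb(m-1,r,\Sym(F)) + (\text{constants})$ over a tree of depth $\le \ex_q(r,\mathcal{H}_F)$ in which each level costs one factor of $m$, we get $\forb(m,r,\Sym(F)) = O\big(m^{1+\ex_q(r,\mathcal{H}_F)}\big)$; the extra $+1$ in the exponent is the base case $\forb(m,\cdot) = O(m)$ for a configuration with a bounded row, or more simply the single final induction down to a trivial family.

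I would present the argument by induction on $m$ with the inductive hypothesis $\forb(m,r,\Sym(F)) \le c\, m^{1+\ex_q(r,\mathcal{H}_F)}$ for a suitable constant $c = c(F,r)$, feeding in the level-by-level decomposition above: at each node of the induction tree whose multigraph is $\mathcal{H}_F$-free and has fewer than $\ex_q(r,\mathcal{H}_F)$ edges we branch into the $\le \binom{r}{2}$ child edges plus the $r$ ``resolved'' pieces in $\avoid(m-1,\dots)$, and at any node whose multigraph either contains a member of $\mathcal{H}_F$ (giving a constant by the definition of $\mathcal{H}_F$) or has reached $\ex_q(r,\mathcal{H}_F)$ edges of a $q$-bounded multigraph (forcing, by maximality, that the next edge creates an $\mathcal{H}_F$-copy or an edge of multiplicity $q+1$, hence by Lemma~\ref{klog2p} an empty or constant matrix) we terminate. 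Summing geometrically over the at most $r^{O(\ex_q(r,\mathcal{H}_F))}$ leaves, each contributing a term of size $O(m \cdot m^{\ex_q(r,\mathcal{H}_F)})$, gives the claimed bound.

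The main obstacle I anticipate is bookkeeping rather than a genuine mathematical difficulty: one must be careful that the over-counting matrix $A'$ really does control $A$ at every level (the $C_t^{i,j,k} \preceq C_t^{i,j}$ observation is what makes this work), that ``stopping early'' when a member of $\mathcal{H}_F$ appears is legitimate even though $\mathcal{H}_F$ is infinite, and that the edge-multiplicity cap from Lemma~\ref{klog2p} is precisely the cap $q$ used in the definition of $\ex_q$, so that no deeper branch is silently ignored. Keeping the constant $c$ uniform across the finitely many shapes of depth-$\le \ex_q(r,\mathcal{H}_F)$ multigraphs that can occur is routine once these points are nailed down.
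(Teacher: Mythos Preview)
Your proposal is correct and follows essentially the same approach as the paper: use Lemma~\ref{klog2p} to cap edge multiplicities at $q$, observe that after $1+\ex_q(r,\mathcal{H}_F)$ induction steps the induction multigraph must contain a member of $\mathcal{H}_F$ (since any $q$-bounded multigraph with more than $\ex_q(r,\mathcal{H}_F)$ edges does, and the $(q{+}1)$-fold edge is itself in $\mathcal{H}_F$), hence the repeated-column matrix is $O(1)$ at that depth, yielding $\forb(m,r,\Sym(F))=O(m^{1+\ex_q(r,\mathcal{H}_F)})$. The paper's proof is only a few lines and omits the over-counting and branching bookkeeping you spell out; your first explanation of the ``$+1$'' (base case $O(m)$) is a bit off, but your second one---one more edge beyond an extremal $\mathcal{H}_F$-free multigraph forces a forbidden subgraph---is exactly the paper's reasoning.
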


\begin{proof}
Let $A \in \avoid(m,r,\Sym(F))$. Let $S$ be a sequence of symbol pairs of length $1+\ex_q(r, \mathcal{H}_F)$. Then $A_F^S$ is forced by the definition of $\ex_q$ to have a symbol pair repeated $q+1$ times or a subsequence $S'$ such that $\textup{Graph}(S')$ is isomorphic to some graph in $\mathcal{H}_F$. In either case $|A_F^S| = O(1)$ for all $S$ by the definition of $\mathcal{H}_F$. Since $S$ has length $1+\ex_q(r, \mathcal{H}_F)$, we remarked earlier that this implies that $\forb(m,r,\sym(F)) = O\big(m^{1+\ex_{q}(r,\mathcal{H}_F)}\big)$.
\end{proof}

\begin{remark}
	For a subset $\mathcal{H}'_F \subseteq \mathcal{H}_F$ using $\ex_q(r,\mathcal{H}_F) \leq \ex_q(r,\mathcal{H}'_F)$, we have that $\forb(m,r,\sym(F)) = O\big(m^{1+\ex_{q}(r,\mathcal{H}'_F)}\big)$. Thus we do not need to fully determine $\mathcal{H}_F$ to obtain an upper bound. 
\end{remark}

Now that we have established the multigraph framework, we wish to determine what graphs are in $\mathcal{H}_F$ given a forbidden configuration $F$. 

\subsection{Determining $\mathcal{H}_F$}\label{Section: Restrictions of the Induction Multigraph}
Motivated by Lemma~\ref{multisymbol to multicolumn}, we study $\mathcal{H}_F$ for a given configuration $F$. The following comes directly from the definition of $\ch^n(F)$. 
\begin{remark}\label{rem:nonkr}
If $\forb(m,r,\sym(\ch^n(F))) = O(1)$, then $n \cdot K_r \in \mathcal{H}_F$, where $K_r$ denotes the complete graph on $r$ vertices.
\end{remark}
Since $K_r$ is the usual notation for the complete graph on $r$ vertices, we use it whenever we speak about graphs in $\mathcal{H}_F$. This should not cause confusion with $K_k$ that denotes the complete configuration on $k$ rows. 
The next two lemmas are used for proving constant upper bounds. 
\begin{lemma}\cite{ansteelu}\cite{bb}\label{al14} For any $\ell \geq 2$, 
$\forb(m, r, \mathbf{S}(I_\ell, T_\ell)) \leq 2^{c_r\ell^2}$ for some constant $c_r$.  
\end{lemma}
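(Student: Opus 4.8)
The plan is to reduce the statement for general $r$ to the two-symbol case — the quantitative form of the \bb{} theorem — and then to induct on the number of symbols.

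\emph{Base case $r=2$.} Here $\mathbf{S}(I_\ell)=\{I_\ell,I_\ell^c\}$ and $\mathbf{S}(T_\ell)=\{T_\ell,T_\ell^c\}$. Deleting the first row and the last column of $T_{\ell+1}$ and then reversing the order of the rows and the order of the columns exhibits $T_\ell^c$ as a configuration of $T_{\ell+1}$, while $I_\ell\prec I_{\ell+1}$ trivially, so $\avoid(m,2,\mathbf{S}(I_\ell,T_\ell))\subseteq\avoid(m,\{I_{\ell+1},I_{\ell+1}^c,T_{\ell+1}\})$. By \cite{bb} this family has $\forb$ bounded independently of $m$, and \cite{ansteelu} supplies the explicit bound $\forb(m,\{I_{\ell+1},I_{\ell+1}^c,T_{\ell+1}\})\le 2^{c\ell^2}$ for an absolute constant $c$. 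It is worth recalling the mechanism: viewing the columns of a simple matrix as subsets of $[m]$, avoiding $T_\ell$ caps every chain in the containment order at $\ell$ members (a chain of length $\ell+1$, with a witnessing element chosen at each step, yields $T_\ell$), so by Mirsky's theorem the columns split into at most $\ell$ antichains; inside an antichain the avoidance of $I_\ell$ and $I_\ell^c$ — no $\ell$ members admitting private, respectively co-private, representatives — bounds the size by a constant.

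\emph{Induction step $r-1\to r$.} Let $A$ be a simple $m$-rowed $r$-matrix avoiding $\mathbf{S}(I_\ell,T_\ell)$, and partition its columns according to which symbols occur in them. A column that omits some symbol $j$ lies in the $(r-1)$-symbol sub-matrix $A_j$ of columns avoiding $j$, which avoids $\mathbf{S}(I_\ell,T_\ell)$ over $[r]\setminus\{j\}$; by the induction hypothesis $|A_j|\le 2^{c_{r-1}\ell^2}$, so such columns number at most $r\cdot 2^{c_{r-1}\ell^2}$. What remains — and this is the crux — is to bound the columns that use all $r$ symbols: one shows that if there are too many of them, a Ramsey/pigeonhole selection extracts $\ell$ of them together with $\ell$ rows on which only two symbols $a\ne b$ appear, arranged as $I_\ell(a,b)$ or $T_\ell(a,b)$, contradicting the hypothesis. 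Absorbing both contributions into a single constant $c_r$ (of the shape $C^r c$) then gives $\forb(m,r,\mathbf{S}(I_\ell,T_\ell))\le 2^{c_r\ell^2}$.

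The main obstacle is precisely this last point: the columns that involve every symbol need not, a priori, concentrate on any two symbols, and making the selection while keeping the bound singly exponential in $\ell^2$ (rather than letting it grow like a tower in $r$) is the quantitative content of \cite{ansteelu}. For the present paper it is therefore cleanest to invoke that result directly, with \cite{bb} as the underlying $r=2$ ingredient.
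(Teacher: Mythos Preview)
The paper does not prove this lemma at all: it is quoted directly from \cite{ansteelu} and \cite{bb} with no argument. So there is no ``paper's own proof'' to compare against; the question is only whether your write-up stands on its own as a proof.

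It does not. Your induction step has a genuine gap, which you in fact name yourself: for the columns that use all $r$ symbols you assert that ``a Ramsey/pigeonhole selection extracts $\ell$ of them together with $\ell$ rows on which only two symbols $a\ne b$ appear, arranged as $I_\ell(a,b)$ or $T_\ell(a,b)$,'' but you do not carry this out, and this is exactly the non-trivial part. A column using all $r$ symbols need not look anything like a two-symbol column on any small set of rows, and getting $\ell$ such columns to align on a common row set with a common pair of symbols, while keeping the bound of the form $2^{c_r\ell^2}$ rather than a tower in $r$, is the whole content of \cite{ansteelu}. Your final paragraph concedes this and proposes to ``invoke that result directly,'' which is precisely what the paper does --- but then the preceding induction sketch is not a proof, only motivation.

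Your base case, by contrast, is fine: the reductions $T_\ell^c\prec T_{\ell+1}$ and $I_\ell\prec I_{\ell+1}$ are correct, and the chain/antichain picture you recall is the right intuition for the two-symbol bound. If you want to keep this material, present it explicitly as an outline of why the cited bound holds, not as a self-contained proof.
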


\begin{definition}
Let $\mathcal{F} = \{F_1, F_2, ... , F_k\}$ and $\mathcal{G} = \{G_1, G_2, . . . , G_\ell\}$. If for every $G_i$, there is some $F_j$ with $F_j \prec G_i$, we say that $\mathcal{F} \prec \mathcal{G}$.
\end{definition}

\begin{lemma}\cite{fing}\label{fingtheorem}
If $\mathcal{F} \prec \mathcal{G}$, then $\forb(m, r, \mathcal{F}) \leq \forb(m, r, \mathcal{G})$.
\end{lemma}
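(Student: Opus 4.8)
The plan is to reduce the family case to the single-matrix case of the analogous statement (which is the multi-symbol version of the basic monotonicity fact that $F' \prec F$ implies $\forb(m,F') \le \forb(m,F)$, already quoted earlier in the excerpt for $r=2$). First I would fix $m$ and $r$ and take a matrix $A \in \avoid(m,r,\mathcal{G})$ with $|A| = \forb(m,r,\mathcal{G})$; it suffices to show $A \in \avoid(m,r,\mathcal{F})$, since that immediately yields $\forb(m,r,\mathcal{F}) = \max\{|B| : B \in \avoid(m,r,\mathcal{F})\} \ge |A| = \forb(m,r,\mathcal{G})$.

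The key step is to argue by contradiction: suppose $A \notin \avoid(m,r,\mathcal{F})$, so some $F_j \in \mathcal{F}$ is a row–column permutation of a submatrix of $A$, i.e.\ $F_j \prec A$ in the $r$-symbol sense. By the hypothesis $\mathcal{F} \prec \mathcal{G}$, there is some $G_i \in \mathcal{G}$ with $F_j \prec G_i$. Here I need the transitivity of the configuration relation $\prec$: if $F_j \prec G_i$ and $G_i$ itself were a subconfiguration of $A$ we would be done, but what we actually have is $F_j \prec G_i$ and $F_j \prec A$, which does not directly chain. So I would instead run the argument the other way: I do not start from $F_j \prec A$; rather I use $\mathcal{F} \prec \mathcal{G}$ together with $A$'s membership in $\avoid(m,r,\mathcal{G})$ to conclude $A \in \avoid(m,r,\mathcal{F})$ directly. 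Concretely, to show $A \in \avoid(m,r,\mathcal{F})$ I must show $F_j \not\prec A$ for every $F_j \in \mathcal{F}$. Fix such an $F_j$ and pick $G_i \in \mathcal{G}$ with $F_j \prec G_i$. Since $A \in \avoid(m,r,\mathcal{G})$ we have $G_i \not\prec A$. Now transitivity of $\prec$ gives the needed implication in contrapositive form: if $F_j \prec A$, then, because $F_j \prec G_i$ — wait, that is still the wrong direction.

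The correct observation, and the one I would actually write, is the contrapositive of transitivity applied to $G_i$: the statement ``$F_j \prec G_i$'' means every configuration of $F_j$ appears inside $G_i$, hence any superconfiguration of $G_i$ also has $F_j$ as a configuration; equivalently $F_j \prec G_i \prec A \implies F_j \prec A$, so $F_j \not\prec A$ is \emph{not} what $G_i \not\prec A$ gives us. The resolution is that we want the single-matrix lemma phrased as: if $F' \prec F$ then $\avoid(m,r,\mathbf S(F)) \subseteq \avoid(m,r,\mathbf S(F'))$ is false in general, but the correct containment for \emph{forbidding} is $\avoid(m,F) \supseteq \avoid(m,F')$ — a matrix avoiding the smaller configuration need not avoid the larger. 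Hence the honest route is: for each $G_i \in \mathcal G$ choose $F_{\sigma(i)} \in \mathcal F$ with $F_{\sigma(i)} \prec G_i$; then $\mathcal F \subseteq$-avoidance is \emph{weaker}, and one shows $\avoid(m,r,\mathcal F) \subseteq \avoid(m,r,\mathcal G)$ instead — giving $\forb(m,r,\mathcal F) \le \forb(m,r,\mathcal G)$, which is exactly the claim. So I would take $A \in \avoid(m,r,\mathcal F)$, and for each $G_i$ show $G_i \not\prec A$: if $G_i \prec A$ then since $F_{\sigma(i)} \prec G_i$, transitivity gives $F_{\sigma(i)} \prec A$, contradicting $A \in \avoid(m,r,\mathcal F)$. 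Thus $A \in \avoid(m,r,\mathcal G)$, and taking $A$ to be extremal for $\mathcal F$ finishes the proof.

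The only genuine content is transitivity of $\prec$ for $r$-matrices — that a row–column permutation of a submatrix of a row–column permutation of a submatrix is again one such — which is routine; the main obstacle is purely bookkeeping, namely getting the direction of the inclusion right, since forbidding a \emph{larger} (or a \emph{harder-to-embed}) family is a weaker constraint and hence yields a larger $\avoid$ set and a larger $\forb$ value. I would state transitivity explicitly as a one-line sub-claim and then present the inclusion $\avoid(m,r,\mathcal F) \subseteq \avoid(m,r,\mathcal G)$ as above.
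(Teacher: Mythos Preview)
The paper does not supply its own proof of this lemma; it is quoted from \cite{fing} and stated without argument. Your eventual proof --- showing $\avoid(m,r,\mathcal{F}) \subseteq \avoid(m,r,\mathcal{G})$ via transitivity of $\prec$, by taking $A \in \avoid(m,r,\mathcal{F})$, fixing any $G_i \in \mathcal{G}$, choosing $F_j \in \mathcal{F}$ with $F_j \prec G_i$, and arguing that $G_i \prec A$ would force $F_j \prec A$ --- is correct and is exactly the standard argument one would expect.

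That said, the write-up meanders through two wrong attempts before landing on the right direction. The confusion stems from initially starting with $A \in \avoid(m,r,\mathcal{G})$ rather than $A \in \avoid(m,r,\mathcal{F})$; once you read the definition of $\mathcal{F} \prec \mathcal{G}$ carefully (for every $G_i$ there is an $F_j$ with $F_j \prec G_i$, so each $G_i$ is ``at least as hard to avoid'' as some $F_j$), the inclusion $\avoid(m,r,\mathcal{F}) \subseteq \avoid(m,r,\mathcal{G})$ is immediate. In a final version you should delete the first two paragraphs of false starts and keep only the last argument, plus the one-line transitivity sub-claim you mention.
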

For the sake of convenience we introduce the following notations.
\begin{definition}
$\Gamma_n$, $\mathfrak{C}_n$, and $\mathfrak{D}_n$ are defined to be the following graphs.

\[\begin{tikzpicture}
\node[draw,circle] at (0,0)(00){};
\node[draw,circle] at (0,-2)(0-2){};
\node[draw,circle] at (2,0)(20){};
\node[draw,circle, color=white] at (2,-2)(2-2){};
\node at (0,-1)(...){\dots};
\node at (0,-0.75)(n){$n$};
\draw[bend left=20] (00) to (0-2);
\draw[bend right=20] (00) to (0-2);
\draw[bend left=30] (00) to (0-2);
\draw[bend right=30] (00) to (0-2);
\draw[bend left=40] (00) to (0-2);
\draw[bend right=40] (00) to (0-2);
\draw[bend left] (00) to (20);
\draw[bend right,color=white] (0-2) to (2-2);
\end{tikzpicture}\hspace{1cm} \begin{tikzpicture}
\foreach \s in {0,2}{
	\foreach \t in {0,-2}{
    
	\node[draw,circle] at (\s,\t)(\s\t){};
    }
    }
\node at (0,-1)(...){\dots};
\node at (0,-0.75)(n){$n$};
\draw[bend left=20] (00) to (0-2);
\draw[bend right=20] (00) to (0-2);
\draw[bend left=30] (00) to (0-2);
\draw[bend right=30] (00) to (0-2);
\draw[bend left=40] (00) to (0-2);
\draw[bend right=40] (00) to (0-2);
\draw[bend left] (00) to (20);
\draw[bend right] (0-2) to (2-2);
\node[draw, circle] at (4,0)(40){};
\node[draw, circle] at (4,-2)(4-2){};
\node[draw, circle] at (5,-1)(5-1){};
\node at (4,-1)(...){\dots};
\node at (4,-0.75)(n){$n$};
\draw[bend left=20] (40) to (4-2);
\draw[bend right=20] (40) to (4-2);
\draw[bend left=30] (40) to (4-2);
\draw[bend right=30] (40) to (4-2);
\draw[bend left=40] (40) to (4-2);
\draw[bend right=40] (40) to (4-2);
\draw[bend left] (40) to (5-1);
\draw[bend right] (4-2) to (5-1);
\end{tikzpicture}\]
\end{definition}

\begin{lemma}\label{cdgamma}
Let $F$ be a $k$-rowed matrix that is a configuration of

\[\begin{bmatrix}
1 & \cdots & 1\\
 0 & \cdots & 0\\
 & K_{k-2} & \\
\end{bmatrix}.\]

Then $\mathfrak{C}_{k-2}, \mathfrak{D}_{k-2}, \Gamma_{k-1} \in \mathcal{H}_F$.
\end{lemma}

\begin{proof}
Let $A_F \in \avoid(m,r,\Sym(F))$. Note that $\mathfrak{C}_{k-2}, \mathfrak{D}_{k-2}, \Gamma_{k-1}$ can be all characterized by some permutation of the sequence $(k-2)\cdot(i,j), (i,f), (j,\ell)$, for some symbols $i\neq j$, and symbols $f, \ell$ not necessarily distinct nor different from $i$ or $j$.

Consider $A_F^S$ where $S$ is a sequence of symbol pairs with a permutation of $(k-2)\cdot(i,j), (i,f), (j,\ell)$ as a subsequence. Suppose $|A_F^S|>0$ for a contradiction. Then
\[
F(i,j) \prec \begin{bmatrix}
1 & \cdots & 1\\
0 & \cdots & 0\\
& K_{k-2} & \\
\end{bmatrix}(i,j) \prec
\begin{bmatrix}
i&j
\end{bmatrix}^{k-2} \times \begin{bmatrix}
i&f
\end{bmatrix} \times
\begin{bmatrix}
j&\ell
\end{bmatrix}\prec  \prod_{(xy) \in S} \begin{bmatrix}
x&y
\end{bmatrix} \prec A_F,
\] 
a contradiction, so $|A_F^S| = 0$ for all such sequences $S$. This implies that $\mathfrak{C}_{k-2}, \mathfrak{D}_{k-2}, \Gamma_{k-1} \in \mathcal{H}_F$. 
\end{proof}

\begin{lemma}\label{starlemma}
Let $F$ be a $k$-rowed matrix that is a configuration of

\[\begin{bmatrix}
1 & \cdots & 1\\ 
 & K_{k-1} & 
\end{bmatrix}.\]

Then $(k-1) \cdot S_{r-1}, \mathfrak{C}_{k-1}, \mathfrak{D}_{k-1} \in \mathcal{H}_F$, where $S_{r-1}$ is a star with $r-1$ rays.
\end{lemma}

\begin{proof}
First, $\mathfrak{C}_{k-1}, \mathfrak{D}_{k-1} \in \mathcal{H}_F$ due to the previous lemma. This is because
\[
F \prec \begin{bmatrix}
1 & \cdots & 1\\ 
& K_{k-1} & 
\end{bmatrix} \prec \begin{bmatrix}
1 & \cdots & 1\\
0 & \cdots & 0\\
& K_{k-1} & \\
\end{bmatrix}.
\]
Let $A_F \in \avoid(m,r,\Sym(F))$. Now let $S$ be a finite sequence of symbol pairs such that there exists a subsequence $S'$ of $S$ with $\textup{Graph}(S') \cong (k-1) \cdot S_{r-1}$. Denote by $S_{r-1}(j)$ the star graph with $r-1$ edges centered at vertex $j$. If $\textup{Graph}(S') = (k-1) \cdot S_{r-1}(j)$, where $j \neq 0$, then suppose for a contradiction that $|A_F^S| > 0$. Since $j \neq 0$, there exists a symbol $i$ such that $i<j$. We then get that
\[
F(i,j) \prec \begin{bmatrix}
1 & \cdots & 1\\ 
& K_{k-1} & 
\end{bmatrix}(i,j) \prec \begin{bmatrix}
i&j
\end{bmatrix}^{k-1} \times \begin{bmatrix}
j & *
\end{bmatrix} \prec \prod_{(xy) \in S'} \begin{bmatrix}
x&y
\end{bmatrix} \prec \prod_{(xy) \in S} \begin{bmatrix}
x&y
\end{bmatrix} \prec A_F,
\] 
where $*$ is any symbol other than $i$ or $j$. This is a contradiction since $A_F \in \avoid(m,r,\Sym(F))$. Thus $|A_F^S| = 0 = O(1)$. 

Now suppose that $\textup{Graph}(S') = (k-1) \cdot S_{r-1}(0)$. Then suppose for a contradiction that $|A_F^S| > 1$. Since $A_F^S$ is simple, there exists a non-zero symbol $j$ in $A_F^S$. Thus,
\[
F(0,j) \prec \begin{bmatrix}
1 & \cdots & 1\\ 
& K_{k-1} & 
\end{bmatrix}(0,j) \prec \begin{bmatrix}
0&j
\end{bmatrix}^{k-1} \times \begin{bmatrix}
j 
\end{bmatrix} \prec \prod_{(xy) \in S'} \begin{bmatrix}
x&y
\end{bmatrix} \prec \prod_{(xy) \in S} \begin{bmatrix}
x&y
\end{bmatrix} \prec A_F,
\] 
which is a contradiction. This shows that $|A_F^S| \leq 1 = O(1)$. Thus, $(k-1) \cdot S_{r-1} \in \mathcal{H}_F$. 
\end{proof}

\section{Computational Results}\label{sec:Computational Results}
\indent\indent We present a few examples of forbidden configurations $F$ for which an asymptotically tight bound for $\forb(m,r,\Sym(F))$ can be obtained using Lemma \ref{multisymbol to multicolumn}. Since constant rows provided some obstacles in lower bound constructions, we want to see their effects on the upper bound.

\begin{theorem}\label{symT2}
Let $T_2$ be the $2 \times 2$ triangular matrix. Then

\[\forb(m,r,\sym(T_2)) = \Theta(m^{r-1}).\]
\end{theorem}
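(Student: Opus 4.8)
The plan is to prove the matching bounds separately. For the lower bound I would apply Theorem~\ref{lbound} directly: every column of $T_2=\begin{bmatrix}1&1\\0&1\end{bmatrix}$ contains at most one $0$ and at most two $1$'s, so $n_0=1$, $n_1=2$, whence $n=2\ge 2$ and Theorem~\ref{lbound} yields $\forb(m,r,\sym(T_2))=\Omega(m^{(n-1)(r-1)})=\Omega(m^{r-1})$. (Concretely, an $r$-matrix whose columns never repeat a nonzero symbol avoids every $T_2(i,j)$, because $T_2(i,j)$ has the column $\begin{bmatrix}j\\j\end{bmatrix}$ with $j\ge 1$.)

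For the upper bound I would route through Lemma~\ref{multisymbol to multicolumn}. Since $T_2$ has $k=2$ rows and is $1$-simple, the relevant edge-multiplicity is $q=k+\lceil\log_2 1\rceil-1=1$, so it suffices to prove $\ex_1(r,\mathcal{H}_{T_2})\le r-2$; Lemma~\ref{multisymbol to multicolumn} then gives $\forb(m,r,\sym(T_2))=O(m^{1+(r-2)})=O(m^{r-1})$, which together with the lower bound finishes the theorem. The mechanism is the following observation: if $A\in\avoid(m,r,\sym(T_2))$ and we run standard induction on a symbol pair $\{i,j\}$ with $i<j$ on any row $t$, then the matrix $A_{T_2}^{(ij)}$ has no entry equal to $j$ — otherwise some column $\mathbf{x}$ of it has a $j$ in a row $s$, and then the two columns of $A$ lying over $i$ and over $j$ in row $t$ exhibit $\begin{bmatrix}i&j\\j&j\end{bmatrix}\prec A$, a row permutation of $T_2(i,j)$, a contradiction. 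Hence each induction on $\{i,j\}$ with $i<j$ deletes symbol $j$ (no matter which row is used), and a later induction on any pair containing an already-deleted symbol leaves the empty matrix.

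To get $\ex_1(r,\mathcal{H}_{T_2})\le r-2$ I would show that every simple graph $G$ on $\{0,1,\dots,r-1\}$ with at least $r-1$ edges contains a member of $\mathcal{H}_{T_2}$. Orient each edge from its smaller to its larger endpoint. If some vertex $c$ has two in-neighbours $a,b<c$, then inducting on $(a,c)$ and then $(b,c)$ first removes symbol $c$ and then empties the matrix, so $\{(a,c),(b,c)\}$ (with the remaining vertices isolated) lies in $\mathcal{H}_{T_2}$. If some vertex $b$ has an in-neighbour $a<b$ and an out-neighbour $c>b$, inducting on $(a,b)$ then $(b,c)$ does the same. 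Otherwise every vertex has in-degree $\le 1$ and is a source or a sink, so the edges form a vertex-disjoint union of out-stars; a double count forces at most $r-1$ edges with equality only when $G$ is the star centred at $0$ with leaves $1,\dots,r-1$, and inducting on $(0,1),(0,2),\dots,(0,r-1)$ in turn removes symbols $1,\dots,r-1$, leaving a simple matrix over the single symbol $0$, which has at most one column — so this star, too, is in $\mathcal{H}_{T_2}$. The main care needed is in this orientation/case analysis and in checking that the ``no entry equal to $j$'' claim is robust to the choice of induction row, as the definition of $\mathcal{H}_{T_2}$ demands. A self-contained alternative bypasses $\mathcal{H}_F$: fixing a row $t$ of an extremal $A$, the columns appearing under exactly one symbol in row $t$ become (after deleting row $t$) a simple matrix in $\avoid(m-1,r,\sym(T_2))$, while each column appearing under two or more symbols, after deleting row $t$, omits its largest row-$t$ symbol $j$ and so belongs to a copy of $\avoid(m-1,r-1,\sym(T_2))$; this gives $\forb(m,r,\sym(T_2))\le\forb(m-1,r,\sym(T_2))+(r-1)\,\forb(m-1,r-1,\sym(T_2))$, and induction on $r$ with unrolling in $m$ again yields $O(m^{r-1})$.
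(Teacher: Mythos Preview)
Your proof is correct and, like the paper, routes the upper bound through Lemma~\ref{multisymbol to multicolumn} by bounding $\ex_1(r,\mathcal{H}_{T_2})\le r-2$; the organization, however, is different and in one respect sharper. The paper argues that $\{P_4,K_3,S_{r-1}\}\subset\mathcal{H}_{T_2}$ (the first two via a two-edge observation about $A^{(ij_1)(ij_2)}$, the last via Lemma~\ref{chstar}), and then notes that a simple graph avoiding these is a forest of stars, hence a subgraph of some $S_{r-1}$. You instead isolate the one-edge fact that inducting on $(i,j)$ with $i<j$ eliminates the symbol $j$ outright, and run an orientation argument: any $P_3$ whose centre is not its minimum already lies in $\mathcal{H}_{T_2}$, which forces the induction graph to be a disjoint union of out-stars, and $r-1$ edges then pins it to $S_{r-1}(0)$, which you handle directly without appealing to Lemma~\ref{chstar}. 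Your single-edge observation is strictly stronger than the paper's two-edge version and makes the structural step shorter. The self-contained recursion you sketch at the end is also valid once one writes the overcount as $\sum_{\mathbf{x}}(|S_{\mathbf{x}}|-1)=\sum_{j=1}^{r-1}|\{\mathbf{x}:j\in S_{\mathbf{x}},\ j\neq\min S_{\mathbf{x}}\}|$ and observes that each inner set is a simple $j$-free matrix; this yields the bound without the multigraph machinery at all, which the paper does not do.
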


\begin{proof}
The lower bound follows from Theorem \ref{lbound}. 

For the upper bound, we first wish to find $\mathcal{H}_{T_2}$. Since we cannot have an edge with multiplicity $k+\lceil\log_2(p)\rceil=2+0=2$, we avoid all double edges. 

By Lemma $\ref{starlemma}$, we know that $S_{r-1}, \mathfrak{C}_1, \mathfrak{D}_1 \in \mathcal{H}_{T_2}$. Note that $\mathfrak{C}_1 \cong P_4$, the path on 4 vertices, and $\mathfrak{D}_1 \cong C_3$, the cycle on 3 vertices. 

We observe that $\ex_1(r, \{S_{r-1}, P_4, C_3\} ) \leq r-1$. If a simple graph $G$ on $r$ vertices has at least $r$ edges, then $C_n$ is a subgraph of $G$ for some $n = 3, ..., r$. We must avoid $C_n$ for any $n = 3, ..., r$ since we must avoid $C_3$ and $P_4$, which is a subgraph of $C_n$ for $n=4,...,r$. 

Now we suppose that a simple graph $G$ on $r$ vertices avoids $S_{r-1}, P_4, C_3$ as subgraphs and has $r-1$ edges. Avoiding $C_3$ and $P_4$ implies that we must avoid cycles of any length, so $G$ is a forest. We can further deduce that $G$ is a tree since a forest on $r$ vertices with more than one connected component will have fewer than $r-1$ edges. Now avoiding $P_4$ implies that the diameter of $G$ is at most 3, so $G$ must be a star graph, which is impossible since $G$ cannot contain a star graph by assumption, so $\ex_1(r, \{S_{r-1}, P_4, C_3\} ) \leq r-2$. By Lemma \ref{multisymbol to multicolumn}, we get that $\forb(m,r,\Sym(T_2)) = O(m^{r-1})$, matching the lower bound.
\end{proof}

\begin{theorem}\label{1111K2}
	Let $k \geq 3$ and 
	\[F = \begin{bmatrix}
	1 & \cdots & 1\\
	& K_{k-1} & 
	\end{bmatrix}.\]
	
	Then $\forb(m,r,\sym(F)) = \Theta(m^{\max\{(k-1)(r-1), (k-2)\binom{r}{2}\}})$.
	
\end{theorem}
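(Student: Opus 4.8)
The lower bound comes from the two existing constructions. Every column of $F$ is the constant top $1$ stacked on a column of $K_{k-1}$, so the largest number of $1$'s in a column is $n_1=k$ (the all-$1$ column); Theorem~\ref{lbound} with $n=k$ gives $\forb(m,r,\sym(F))=\Omega(m^{(k-1)(r-1)})$. Also $K_{k-1}\prec F$ (the bottom $k-1$ rows), $K_{k-1}$ is simple with no constant row, and $\forb(m,K_{k-1})=\Theta(m^{k-2})$ by Theorem~\ref{Sauer's Thm}, so Remark~\ref{rem:lbound} gives $\forb(m,r,\sym(F))\ge\forb\!\left(\tfrac{m}{\binom{r}{2}},K_{k-1}\right)^{\binom{r}{2}}=\Omega(m^{(k-2)\binom{r}{2}})$. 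Taking the larger term yields the lower bound.

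For the upper bound I would first unravel the standard induction. Writing $F^{(i)}=\left[\begin{smallmatrix}\mathbf{1}_{1\times 2^{i-1}}\\ K_{i-1}\end{smallmatrix}\right]$, so that $F=F^{(k)}$ and $F^{(i)}\times\zeroone=F^{(i+1)}$, one checks $\ch(F^{(i)})=\{K_{i-1},F^{(i-1)}\}$ for $i\ge 2$; since $F^{(i-1)}\prec K_{i-1}$ the family $\ch^{j}(F)$ is, as far as $\forb$ is concerned, governed by $F^{(k-j)}$, so $\ch^{k-1}(F)=\{\zeroone,[1]\}$ (avoidable with $O(1)$ columns) while $\ch^{k-2}(F)=\{K_2,T_2\}$ (avoidable with $\Theta(m^{r-1})$ columns by Theorem~\ref{symT2}). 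From the first fact, Remark~\ref{rem:nonkr} gives $(k-1)\cdot K_r\in\mathcal{H}_F$, and since $\zeroone\in\ch^{k-1}(F)$ Lemmas~\ref{casette tape only} and \ref{chstar} give $\mathfrak{C}_{k-1},\mathfrak{D}_{k-1},(k-1)\cdot S_{r-1},(k-1)\cdot C_3,\dots,(k-1)\cdot C_r\in\mathcal{H}_F$; moreover $[1]\in\ch^{k-1}(F)$ means $k-1$ inductions on a pair $\{a,b\}$ leave a matrix with no entry equal to $\max(a,b)$, so any multigraph containing a multiplicity-$(k-1)$ edge together with any further edge at the larger of its two endpoints lies in $\mathcal{H}_F$. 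These constraints force the extremal multigraph for $\ex_{k-1}(r,\mathcal{H}_F)$ to consist of a set $B$ of "killed" vertices, each joined by a multiplicity-$(k-1)$ edge to a smaller vertex and otherwise isolated, plus a clique on the remaining $r-|B|$ vertices of multiplicity $\le k-2$; optimizing $(k-1)|B|+(k-2)\binom{r-|B|}{2}$ over $1\le |B|\le r-2$ (the value $|B|=r-1$ being precisely the forbidden $(k-1)\cdot S_{r-1}$) against the all-multiplicity-$(k-2)$ option shows $\ex_{k-1}(r,\mathcal{H}_F)\le\max\{(k-1)(r-1),(k-2)\binom{r}{2}\}-1$ in the (finitely many, small-$r$) cases where $(k-1)(r-1)$ is the larger term, and Lemma~\ref{multisymbol to multicolumn} gives $\forb(m,r,\sym(F))=O(m^{(k-1)(r-1)})$ there; the base case $r=2$ is just Lemma~\ref{AF10}.

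In the range where $(k-2)\binom{r}{2}$ dominates (all larger $r$), Lemma~\ref{multisymbol to multicolumn} is too lossy by one power of $m$: $(k-2)\cdot K_r$ itself avoids $\mathcal{H}_F$, since after $k-2$ inductions on every pair one is left with a matrix avoiding $\sym(T_2)$, which by Theorem~\ref{symT2} can have $\Theta(m^{r-1})$ columns rather than $O(1)$. The plan is to exploit exactly this: because $T_2\prec F$ and $T_2\in\ch^{k-2}(F)$, inducting $k-2$ times on $\{a,b\}$ leaves a matrix avoiding $T_2(a,b)$, and doing so over all $\binom{r}{2}$ pairs leaves a matrix in $\avoid(m,r,\sym(T_2))$; one then runs the multiple-induction of Section~\ref{multiinductions} down to the $T_2$-level, charging the last $r-1$ of the $(k-2)\binom{r}{2}$ "levels" to the bound $\forb(m,r,\sym(T_2))=O(m^{r-1})$ instead of to fresh factors of $m$, which yields $O(m^{(k-2)\binom{r}{2}})$. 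Combining the two ranges gives the theorem.

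The hardest step is this last one. Unlike Lemmas~\ref{klog2p}--\ref{multisymbol to multicolumn}, which bound $|A_F^G|$ by a constant whenever $G$ contains a member of $\mathcal{H}_F$, here one needs a uniform bound of $O(m^{r-1})$ on $|A_F^G|$ over \emph{every} way of performing $(k-2)\binom{r}{2}-(r-1)$ inductions --- i.e.\ a weighted version of $\mathcal{H}_F$ in which the constant $c_{F,G}$ is replaced by $O(m^{r-1})$ and the extremal-graph count of Lemma~\ref{multisymbol to multicolumn} is re-proved with this slack. The obstacle is to check that an induction multigraph with that many edges and no member of $\mathcal{H}_F$ must have its "killed" symbols and its under-budget pairs arranged so that the $T_2$-type constraints on the surviving pairs still pin the residual matrix down to $O(m^{r-1})$, rather than letting it grow to $\Theta(m^{\binom{r}{2}})$; making the graph theory here match $(k-2)\binom{r}{2}-(r-1)$ exactly, instead of losing a bit more, is where the real work lies.
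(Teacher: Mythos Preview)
Your lower bound is correct and identical to the paper's.

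The upper bound has a real gap, and the missing idea is much simpler than the workaround you sketch. You extract only $(k-1)\cdot K_r\in\mathcal{H}_F$ from Remark~\ref{rem:nonkr}, and then (correctly) observe that this is one level too weak: with only that information, $(k-2)\cdot K_r$ seems to avoid $\mathcal{H}_F$, since after $k-2$ inductions on every pair the residual is merely in $\avoid(m',r,\sym(T_2))$, which allows $\Theta(m^{r-1})$ columns. Your proposed fix---carrying an $O(m^{r-1})$ slack through a ``weighted $\mathcal{H}_F$'' version of Lemma~\ref{multisymbol to multicolumn}---you yourself flag as incomplete, and it is.

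What you are missing is that $(k-2)\cdot K_r\in\mathcal{H}_F$ already, by a direct argument that uses \emph{edges outside the pair $(ij)$} to build the constant top row of $F(i,j)$. Suppose $|A_F^{(k-2)\cdot K_r}|\ge 2$. Simplicity gives a row containing two distinct symbols $[i\ j]$ with $i<j$. Going back up, the $(k-2)$ inductions on $(ij)$ contribute $[i\ j]^{k-2}$, and one of the $(k-2)$ copies of an edge $(jx)$ with $x\neq i$ (available in $(k-2)\cdot K_r$ once $r\ge 3$) contributes $[j\ x]$. Hence
\[
F(i,j)=\begin{bmatrix}j&\cdots&j\\ &K_{k-1}(i,j)&\end{bmatrix}\ \prec\ [i\ j]\times[i\ j]^{k-2}\times[j\ x]\ \prec\ A,
\]
a contradiction. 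So $|A_F^{(k-2)\cdot K_r}|\le 1$. The point you overlooked is that the all-$j$ top row need not come from another $(ij)$-induction; any $(jx)$-edge supplies a $j$ in every column.

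With $(k-2)\cdot K_r\in\mathcal{H}_F$ and a configuration of $[0\ 1]$ in $\ch^{k-1}(F)$ (you already have $[1]\in\ch^{k-1}(F)$), Lemma~\ref{chn} applies with $n=k-2$ and gives $\forb(m,r,\sym(F))=O(m^{(k-2)\binom{r}{2}})$, the only exceptional case being $n=1,\ r=3$ (i.e.\ $k=3,\ r=3$), where Lemma~\ref{chn} yields $O(m^4)=O(m^{(k-1)(r-1)})$. This is exactly the stated upper bound; your small-$r$ extremal-multigraph optimization and the $T_2$-based ``last $r-1$ levels'' plan are both unnecessary.
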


\begin{proof}
	We have $\forb(m,r,\sym(F)) = \Omega(m^{\max\{(k-1)(r-1), (k-2)\binom{r}{2}\}})$ due to the constructions from Theorem \ref{prod} and \ref{lbound}. 
	
	We want to show that $(k-2) \cdot K_r \in \mathcal{H}_F$. Let $A_F \in \avoid(m,r,\sym(F))$ and $S$ be a sequence of symbol pairs such that there exists $S'$ a subsequence of $S$ such that $\textup{Graph}(S') \cong (k-2) \cdot K_r$. Suppose that for a contradiction that $|A_F^S| \geq 2$, then we have $\begin{bmatrix}i & j\end{bmatrix} \prec A_F^S$ with $i < j$. Then since we have $(k-2) \cdot (ij)$ and $(j*)$ with $* \neq i$ as a subsequence of $S'$, we get that 
	
	\[F(i,j) = \begin{bmatrix}
	j & \cdots & j\\
	& K_{k-1}(i,j) &
	\end{bmatrix} \prec \begin{bmatrix}i & j\end{bmatrix} \times \begin{bmatrix}i & j\end{bmatrix}^{k-2} \times \begin{bmatrix}j & *\end{bmatrix} \prec \prod_{(xy) \in S'} \begin{bmatrix}
	x&y
	\end{bmatrix} \prec \prod_{(xy) \in S} \begin{bmatrix}
	x&y
	\end{bmatrix} \prec A_F, \]
	which is a contradiction, so $|A_F^S| < 2$. In other words, $|A_F^S| = O(1)$ so $(k-2) \cdot K_r \in \mathcal{H}_F$. 
	
	By Lemma \ref{starlemma}, we also get that $(k-1) \cdot S_{r-1}, \mathfrak{C}_{k-1}, \mathfrak{D}_{k-1} \in \mathcal{H}_F$. We must also avoid edges of multiplicity $k$ so we wish to calculate $\ex_{k-1}(r, \{(k-2) \cdot K_r, (k-1) \cdot S_{r-1}, \mathfrak{C}_{k-1}, \mathfrak{D}_{k-1} \})$. 
	
	Suppose that $G$ is a multigraph of maximum edge multiplicity at most $k-1$ on $r$ vertices with at least $(k-2)\binom{r}{2}$ edges that avoids $\{(k-2) \cdot K_r, (k-1) \cdot S_{r-1}, \mathfrak{C}_{k-1}, \mathfrak{D}_{k-1} \}$ as subgraphs. Let $G'$ be the underlying simple graph, that is each multiple edge is replaced by a single one. Then since $G$ cannot have $(k-2) \cdot K_r$ as a subgraph, there must exist  edges $\{i_1, j_1\},\{i_2, j_2\}, \ldots \{i_t, j_t\}  $ in $G$ with multiplicity at least $k-1$, but the maximum edge multiplicity is $k-1$ so the edges $\{i_{\ell}, j_{\ell}\}$ have multiplicities $k-1$. 
	
	Since $G$ avoids $\mathfrak{C}_{k-1}, \mathfrak{D}_{k-1}$ as subgraphs, at most one of the vertices $i_{\ell}$ and $j_{\ell}$ can be incident to another edge besides the $k-1$ edges between $i_{\ell}$ and $j_{\ell}$, that is all edges $\{i_1, j_1\},\{i_2, j_2\}, \ldots \{i_t, j_t\}  $ are pendant edges in the underlying simple graph $G'$. There are $t$ leaves among the vertices $\{i_1, j_1,i_2, j_2, \ldots i_t, j_t \} $, let their set be $L$. Then the number of edges of $G$ is at most $t(k-1)+(k-2)\binom{r-t}{2}$, since edges on the set $V(G)\setminus L$ have multiplicity at most $k-2$ and each vertex of $L$ is incident only with $k-1$ edges whose other endvertex is in   $V(G)\setminus L$. This implies the inequality
\begin{equation}\binom{r}{2}(k-2)\le t(k-1)+(k-2)\binom{r-t}{2},\end{equation}
        that is equivalent with
\begin{equation}\label{eq:krt}(k-2)[2rt-t^2-t]\le 2t(k-1).\end{equation}
Observe that $1\le t<r-1$. The latter inequality follows from the fact that if $t=r-1$, then $(k-1) \cdot S_{r-1}$ is a subgraph of $G$. So plugging in $t+2$ in place of $r$ in (\ref{eq:krt}) we obtain
\begin{equation}
  (k-2)[2(t+2)t-t^2-t]\le 2t(k-1),
\end{equation}
in simpler form
\begin{equation}\label{eq:kt}
  (k-2)[t^2+3t]\le 2t(k-1).
\end{equation}
Since $t\ge 1$, (\ref{eq:kt}) implies $(k-2)4t\le 2t(k-1)$, that is $2(k-2)\le k-1$, which is a contradiction if $k>3$. If $k=3$, then we must have equality everywhere, in particular $t^2+3t=4t$, so $t=1$ and $r=t+2=3$.
We must prove in this case that $\forb(m, 3, \sym(F)) = O(m^4)$. 	Suppose that $G$ is a multigraph of maximum edge multiplicity at most $2$ on $3$ vertices with at least $4$ edges that avoids $\{ K_3, 2 \cdot S_{2}, \mathfrak{C}_{2}, \mathfrak{D}_{2} \}$ as subgraphs. To avoid a $K_3$ and edges of multiplicity at least 3, $G$ must contain a $2 \cdot S_{2}$. Thus, such a $G$ is impossible. 
	
	All cases lead to contradictions, so  $\ex_{k-1}(r,\mathcal{H}_F) \leq \max\{(k-1)(r-1), (k-2)\binom{r}{2}\} -1$. Using Lemma \ref{multisymbol to multicolumn}, we show that $\forb(m, r, \sym(F)) = O(m^{\max\{(k-1)(r-1), (k-2)\binom{r}{2}\}})$. 
	
	The lower bounds and upper bounds agree so $\forb(m, r, \sym(F)) = \Theta(m^{\max\{(k-1)(r-1), (k-2)\binom{r}{2}\}})$. 
\end{proof}

\begin{theorem}\label{thm:4.3}
	Let $F$ be
	\[
	F = \begin{bmatrix}
	1 & 1 \\
	0 & 0 \\
	0 & 1 \\
	\end{bmatrix}.
	\]
	Then $\forb(m,r,\Sym(F)) = \Theta(m^{r-1})$. 
\end{theorem}

\begin{proof}
	Theorem \ref{lbound} provides the lower bound.
	
	First off, we see that $3 \cdot \{i, j\} \in \mathcal{H}_F$ by Lemma \ref{klog2p}. Also, by Lemma \ref{cdgamma}, we have that $\Gamma_2, \mathfrak{C}_1 \cong P_4, \mathfrak{D}_1 \cong C_3 \in \mathcal{H}_F$. 
	
	We claim that any graph $G$ with $r$ vertices and $\deg(v) \geq 1$ for every vertex $v$ is also in $\mathcal{H}_F$. Let $A_F \in \avoid(m,r,\Sym(F))$. Suppose for a contradiction that $|A_F^S| \geq 2$, where there exists a subsequence $S'\subseteq S$ such that $\textup{Graph}(S') \cong G$. Then since $A_F^S$ is simple, there exists $\begin{bmatrix}
	i & j
	\end{bmatrix} \prec A_F^S$ for some symbols $i \neq j$. However,
	\[
	F(i,j) \prec \begin{bmatrix}
	i & j
	\end{bmatrix} \times \begin{bmatrix}
	i & *
	\end{bmatrix}
	\times \begin{bmatrix}
	j & *
	\end{bmatrix} \prec \prod_{(xy) \in S'} \begin{bmatrix}
	x&y
	\end{bmatrix} \prec \prod_{(xy) \in S} \begin{bmatrix}
	x&y
	\end{bmatrix} \prec A_F,
	\]
	a contradiction, so $|A_F^S| < 2$ and thus $|A_F^S| = O(1)$. Therefore, $G \in \mathcal{H}_F$. 
	
	Now suppose we have a graph $G$ with at least $r-1$ edges with maximum edge multiplicity 2 and avoids everything in $\mathcal{H}_F$ as subgraphs. Suppose $G$ has $k$ edges of multiplicity 2. Since $\Gamma_2 \in \mathcal{H}_F$, this gives us $k$ connected components in $G$ each with 2 vertices and 2 edges. This leaves $r-2k$ vertices and $(r-1)-2k$ edges. Among these $r-2k$ vertices, we only have edges of multiplicity 1 but then the induced subgraph of these $r-2k$ vertices must be isomorphic to $S_{r-2k-1}$ because we avoid all cycles due to $C_3, P_4 \in \mathcal{H}_F$. However, $G \in \mathcal{H}_F$ since now we see each vertex have degree at least 1. 
	
	By Lemma \ref{multisymbol to multicolumn}, we have that $\forb(m,r,\Sym(F)) = O(m^{r-1})$ and thus $\forb(m,r,\Sym(F)) = \Theta(m^{r-1})$.
\end{proof}

\section{Block Matrices}\label{sec:blocks}

\indent\indent Let $\mathbf{i}_{p\times q}$ denote the constant matrix of dimensions $p\times q$ with $i$ in every entry. The purpose of this section is to see how large a matrix has to be before a configuration consisting of only 1 or 2 constant matrices must appear. The motivation for this section comes from the fact that constant matrices only have constant rows, which defy the product construction. We begin by looking at the smallest non-trivial case.

\begin{remark}
$\forb(m,r,\sym(\begin{bmatrix}
0\\1
\end{bmatrix})) = r$
\end{remark}

Clearly, we can only have columns containing only 1 symbol.

We can extend the previous remark by having the same matrix repeated some constant number of times. Interestingly, this increases the asymptotic bound from constant to linear. Also, notice how the asymptotic bound is unaffected by the number of columns in the matrix as long as it is greater than 1.

\begin{proposition} \label{01p}
If $q > 1$, then $\forb(m,r,\sym(\begin{bmatrix}
\mathbf{0}_{1\times q} \\ \mathbf{1}_{1\times q}
\end{bmatrix})) = \Theta(m)$. 
\end{proposition}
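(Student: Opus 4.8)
I would not route this through Theorem~\ref{multiplicity}: with $F=\begin{bmatrix}\mathbf{0}_{1\times q}\\\mathbf{1}_{1\times q}\end{bmatrix}$ we have $k=2$ rows and $s=|\supp(F)|=1$, so that theorem only yields $\forb(m,r,\sym(F))=O(m^{2})$, which is not tight. Instead I would prove the two bounds directly (taking $r\ge 2$ throughout, as is implicit). The first step is to unwind what avoiding $\sym(F)$ means. Writing $F=q\cdot\begin{bmatrix}0\\1\end{bmatrix}$, each $F(i,j)\in\sym(F)$ is $q$ copies of the column $\begin{bmatrix}i\\j\end{bmatrix}$, so a row–column permutation of $F(i,j)$ is either $q\cdot\begin{bmatrix}i\\j\end{bmatrix}$ or $q\cdot\begin{bmatrix}j\\i\end{bmatrix}$. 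Hence $A\in\avoid(m,r,\sym(F))$ if and only if: for every ordered pair of distinct rows $(a,b)$ and every ordered pair of distinct symbols $(i,j)$, the number of columns $c$ of $A$ with $c(a)=i$ and $c(b)=j$ is at most $q-1$.

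\textbf{Lower bound.} For $\Omega(m)$ I would take the simple $m$-rowed $(0,1)$-matrix $A$ whose columns are $c_{1},\dots,c_{m}$, where $c_{t}$ has entry $1$ in row $t$ and $0$ everywhere else. On any two rows $a<b$ the only patterns besides $(0,0)$ that occur are $(1,0)$, realized by exactly the column $c_{a}$, and $(0,1)$, realized by exactly the column $c_{b}$; since $q\ge 2$ the criterion above is satisfied, so $\forb(m,r,\sym(F))\ge m$.

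\textbf{Upper bound.} For $O(m)$, I would fix any $A\in\avoid(m,r,\sym(F))$ and any row $a$, and partition the columns of $A$ into classes $\mathcal{C}_{0},\dots,\mathcal{C}_{r-1}$ with $\mathcal{C}_{i}=\{c:c(a)=i\}$. The only possible constant column in $\mathcal{C}_{i}$ is the all-$i$ one, and every other column of $\mathcal{C}_{i}$ has an entry $\neq i$ in some row $b\ne a$. Counting pairs $(c,b)$ with $c\in\mathcal{C}_{i}$, $b\ne a$, $c(b)\ne i$: each non-constant column of $\mathcal{C}_{i}$ contributes at least one such pair, while for each fixed $b$ the criterion caps the number of contributing columns by $\sum_{j\ne i}(q-1)=(r-1)(q-1)$. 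Hence $|\mathcal{C}_{i}|\le (m-1)(r-1)(q-1)+1$, and summing over the $r$ classes gives $\forb(m,r,\sym(F))\le r(r-1)(q-1)(m-1)+r=O(m)$. Combined with the lower bound, $\forb(m,r,\sym(F))=\Theta(m)$.

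\textbf{Where the work is.} The counting is routine; there is no real obstacle. The one place to be careful is the ``if and only if'' of the first step — specifically that forbidding $q\cdot\begin{bmatrix}i\\j\end{bmatrix}$ only for $i<j$ already forbids the row-swapped pattern, so that the per-pair count is genuinely two-sided and the bound $(r-1)(q-1)$ used in the upper-bound step is valid. Once that is in place, the partition-and-double-count argument finishes the estimate immediately.
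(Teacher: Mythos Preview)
Your proof is correct, and your observation about Theorem~\ref{multiplicity} is on point: with $k=2$ the $\binom{m}{k}$ factor in that bound is quadratic, so the theorem as stated only yields $O(m^{2})$ here. The paper's justification for Proposition~\ref{01p} is the single sentence that it ``follows from Remark~\ref{rem:simplest} using Theorem~\ref{multiplicity},'' with no explicit lower-bound construction, so your route is genuinely different. You unpack the avoidance condition into a per-row-pair, per-symbol-pair multiplicity cap of $q-1$, then fix a reference row and double-count to obtain $|A|\le r(r-1)(q-1)(m-1)+r$, which is the sharp linear bound; your $I_m$ construction supplies the matching $\Omega(m)$ that the paper omits. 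What your approach buys is a self-contained argument that actually delivers the claimed order, whereas the paper's cited tool, read literally, falls short by a factor of $m$ and would need an unstated refinement of the F\"uredi pigeonhole (essentially fixing one of the two rows before pigeonholing, which is precisely what you do).
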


\begin{proof}
$I_m$ is a linear construction that avoids $\sym(\begin{bmatrix}
\mathbf{0}_{1\times q} \\ \mathbf{1}_{1\times q}
\end{bmatrix})$ and therefore we have the lower bound $\forb(m,r,\sym(\begin{bmatrix}
\mathbf{0}_{1\times q} \\ \mathbf{1}_{1\times q}
\end{bmatrix})) = \Omega(m)$.

Let $A \in \avoid(m,r,\sym(\begin{bmatrix}
\mathbf{0}_{1\times q} \\ \mathbf{1}_{1\times q}
\end{bmatrix}))$. We have that $A^{(ij)} \in \avoid(m-1, r, \mathbf{S}(\mathbf{0}_{1\times q}))$ since otherwise we would get $\begin{bmatrix}\mathbf{i}_{1\times q} \\ \mathbf{k}_{1\times q}
\end{bmatrix}, \begin{bmatrix}\mathbf{j}_{1\times q} \\ \mathbf{k}_{1\times q}
\end{bmatrix} \in A$ for some $0 \leq k \leq r - 1$, and we must have $i \neq k$ or $j \neq k$. But if $|A^{(ij)}| > r(q-1) + 1$, we have at least $q$ of some symbol in a row of $A^{(ij)}$, meaning $|A^{(ij)}| = O(1)$, which implies that $\forb(m,r,\sym(\begin{bmatrix}
\mathbf{0}_{1\times q} \\ \mathbf{1}_{1\times q}
\end{bmatrix})) = O(m)$. 
\end{proof}

From the restrictions of the $\Sym$, we have $\forb(m, r, \sym(\mathbf{0}_{p\times q})) = \forb(m, r, \sym(\mathbf{1}_{p\times q}))$. Again, interestingly, the asymptotic bound is unaffected by the number of columns. It is only affected by the number of rows in the constant matrix we want to avoid and the number of symbols.

\begin{theorem}
\label{constant}
If $r \geq 3$, then for $m \geq (p-1)(r-1)$, 
\[\forb(m,r,\Sym(\mathbf{0}_{p\times q})) \le \sum_{k=0}^{(p-1)(r-1)}\binom{m}{k}\sum_{\substack{k_1,...,k_{r-1}<p, \\ k_1+\cdots + k_{r-1} = k}}\binom{k}{k_1,...,k_{r-1}} + (q-1)(r-1)\binom{m}{p}.\]
Equality holds if $r-1\ge q-1$.
\end{theorem}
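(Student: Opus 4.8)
The plan is to prove matching upper and lower bounds. For the lower bound, I would exhibit an explicit simple $r$-matrix $A$ with $m$ rows achieving the stated count that avoids $\Sym(\mathbf{0}_{p\times q})$. Recall that $\Sym(\mathbf{0}_{p\times q}) = \{ \mathbf{i}_{p\times q} \,:\, 0 \le i \le r-1 \}$ essentially (more precisely, the matrices $\mathbf{0}_{p\times q}(i,j)$, but since $\mathbf{0}$ has no $1$'s, $\mathbf{0}_{p\times q}(i,j) = \mathbf{i}_{p\times q}$, so forbidding $\Sym$ forces that no symbol $i$ appears $p$ times in $q$ distinct columns on a common set of $p$ rows). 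So a column vector is ``dangerous'' only through a symbol it uses at least $p$ times. The construction I would use: take all columns in which every symbol $1, 2, \dots, r-1$ appears at most $p-1$ times (this handles the ``small part'' and contributes $\sum_{k=0}^{(p-1)(r-1)}\binom{m}{k}\sum \binom{k}{k_1,\dots,k_{r-1}}$, exactly as in the proof of Theorem \ref{lbound}), and then add, for a \emph{fixed} set of $p$ rows, columns that are constant $0$ on those $p$ rows — but only $q-1$ distinct such columns for each symbol value $1, \dots, r-1$ placed on one designated row pattern, being careful to (a) keep the whole matrix simple and (b) never create $q$ columns agreeing in symbol $i$ on a common $p$-set. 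The term $(q-1)(r-1)\binom{m}{p}$ suggests: for each of the $\binom{m}{p}$ choices of the $p$ special rows, and each of the $r-1$ nonzero symbols, we add $q-1$ extra columns. I would need to arrange the off-the-$p$-rows entries so these additions are all distinct from each other and from the small-part columns, and so that no symbol reaches multiplicity $p$ on any $p$-set in $q$ columns; using large symbol values or distinct ``signature'' rows among the remaining $m-p$ rows should do this.

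For the upper bound, let $A \in \avoid(m, r, \Sym(\mathbf{0}_{p\times q}))$. Classify the columns of $A$ into two groups: those in which some nonzero symbol $i \in \{1,\dots,r-1\}$ appears at least $p$ times — call these ``heavy'' — and the rest, the ``light'' columns. The light columns have every nonzero symbol appearing at most $p-1$ times, so on any fixed set of row-positions the number of light columns is at most $\sum_{k=0}^{(p-1)(r-1)}\binom{m}{k}\sum_{k_1,\dots,k_{r-1}<p,\ \sum k_i = k}\binom{k}{k_1,\dots,k_{r-1}}$ (choose which $k \le (p-1)(r-1)$ positions are nonzero, then assign the nonzero symbols), which is the first term. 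The heavy columns must be bounded by $(q-1)(r-1)\binom{m}{p}$: for each heavy column, pick a nonzero symbol $i$ and a set $R$ of exactly $p$ rows on which that column equals $i$; this assigns to each heavy column a pair $(R, i)$ with $|R| = p$ and $1 \le i \le r-1$. If more than $q-1$ heavy columns received the same pair $(R,i)$, then those $\ge q$ columns all equal $i$ on the $p$ rows of $R$, giving $\mathbf{i}_{p\times q} = \mathbf{0}_{p\times q}(0,i) \in \Sym(\mathbf{0}_{p\times q})$ (or the appropriate member of $\Sym$), a contradiction. Hence at most $q-1$ heavy columns per pair, giving at most $(q-1)(r-1)\binom{m}{p}$ heavy columns total, and adding the two bounds yields the result.

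The subtle point — and the main obstacle — is that a single column could be counted both ways or the light-column count could be loose: a careful argument must ensure the light columns and heavy columns partition $A$, and that the light-column bound is exactly the closed form stated (this requires $m \ge (p-1)(r-1)$ so the sum over $k$ genuinely runs up to $(p-1)(r-1)$, matching the hypothesis). The harder direction is really the lower bound: one must verify that the explicit extremal matrix is simultaneously (i) simple, (ii) free of all members of $\Sym(\mathbf{0}_{p\times q})$, and (iii) of exactly the claimed size with no double-counting between the ``light'' block and the $(q-1)(r-1)\binom{m}{p}$ added columns. I expect the bookkeeping for (iii) — showing the added heavy columns are pairwise distinct and distinct from all light columns while still respecting the multiplicity-$(q-1)$ cap on every $p$-set — to be the step requiring the most care, and I would handle it by reserving a block of rows outside each chosen $p$-set to carry unique identifying patterns.
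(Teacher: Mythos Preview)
Your approach is essentially the paper's: partition columns into ``light'' (no restricted symbol appearing $p$ times) and ``heavy,'' bound the light columns by the multinomial sum and the heavy ones by pigeonhole on the pair $(R,i)$, and for the lower bound take all light columns together with $q-1$ tagged heavy columns per pair $(R,i)$. One correction: since $\mathbf{0}_{p\times q}(i,j)=\mathbf{i}_{p\times q}$ and $\Sym$ requires $i<j$, we have $\Sym(\mathbf{0}_{p\times q})=\{\mathbf{i}_{p\times q}:0\le i\le r-2\}$, so the unrestricted symbol is $r-1$, not $0$; your argument as written is literally the proof for $\Sym(\mathbf{1}_{p\times q})$, which gives the same value by the relabelling $i\mapsto r-1-i$, and the paper indeed runs the argument with $r-1$ (their ``$r^{\text{th}}$ symbol'') as the free symbol. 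Your caution about the lower-bound bookkeeping (simplicity and the $(q-1)$-cap on \emph{every} $p$-set, not just the designated one) is well placed---the paper's construction, which puts a single extra non-$(r{-}1)$ symbol on one outside row to distinguish the $q-1$ copies, handles this only sketchily, so your plan to reserve identifying rows is the right instinct.
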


\begin{proof}
We will begin with the upper bound. If we had at least \[\sum_{k=0}^{(p-1)(r-1)}\binom{m}{k}\sum_{\substack{k_1,...,k_{r-1}<p, \\ k_1+\cdots + k_{r-1} = k}}\binom{k}{k_1,...,k_{r-1}} + (q-1)(r-1)\binom{m}{p} + 1\] columns, then at least $(q-1)(r-1)\binom{m}{p} + 1$ columns will have a majority symbol among the symbols not equal to $r$ that appears at least $p$ times in the column, since the first term is the total number of possible columns without this property. But after these $(q-1)(r-1)\binom{m}{p} + 1$ columns, there must exist $p$ rows such that at least $(q-1)(r-1) + 1$ columns has some symbol not equal to $r$ appearing $p$ times for each of the columns.

Then we can apply the Majority Principle to say that there is some  symbol $i\not=r$ that appears at least $q$ times within those $(q-1)(r-1) + 1$ columns and the same $p$ rows, which creates a $\mathbf{i}_{p \times q}$. Thus, 
\[\forb(m,r,\Sym(\mathbf{0}_{p\times q})) \leq \sum_{k=0}^{(p-1)(r-1)}\binom{m}{k}\sum_{\substack{k_1,...,k_{r-1}<p, \\ k_1+\cdots + k_{r-1} = k}}\binom{k}{k_1,...,k_{r-1}} + (q-1)(r-1)\binom{m}{p}.\]

The lower bound is given by the following construction. We take all possible columns that have at most $(p-1)(r-1)$ symbols not equal to $r$ with at most $p-1$ of any non-$r$ symbol. This gives us
\[
\sum_{k=0}^{(p-1)(r-1)}\binom{m}{k}\sum_{\substack{k_1,...,k_{r-1}<p, \\ k_1+\cdots + k_{r-1} = k}}\binom{k}{k_1,...,k_{r-1}}\]
columns. These columns cannot contribute to building something in $\Sym(\mathbf{0}_{p\times q})$ since there are not $p$ of any symbol not equal to $r$ in a column.

For $r\ge q$ we take every possible way to choose $p$ rows out of the $m$ rows, and for each possibility, we have $q-1$ columns that fill those $p$ rows with the same symbol not equal to $r$ and a different symbol not equal to $r$ in a different row to make the columns distinct, with the  symbol $r$ filling up the rest of the rows, for each of the $r-1$ symbols not equal to $r$. This gives us the remaining $(q-1)(r-1)\binom{m}{p}$ columns, which do not contain anything in $\Sym(\mathbf{0}_{p\times q})$ since there are no $q$ columns that contain $p$ of some symbol not equal to $r$.

The lower and upper bounds agree, implying an exact bound.
\end{proof}

The previous theorem shows that after $\Theta(m^{(p-1)(r-1)})$ columns, we are guaranteed a constant matrix configuration in some non-zero symbol. We are also interested to see how many columns it takes to form constant matrix configuration in any symbol. The lifting of the non-zero symbol restriction brings our asymptotic bound all the way down to $\Theta(1)$. 

\begin{theorem}
If $p \geq 2$ and $r \geq 3$, then for all $m \geq r^{r(q-1)}(p-1) + 1$, 
\[r\min(q-1,r)\le\forb(m,r,\mathbf{S}(\mathbf{0}_{p\times q})) \le r(q-1)\]
\end{theorem}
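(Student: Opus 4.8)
The plan is to prove the two inequalities separately, after first unwinding the definition of $\mathbf{S}$ (and assuming $q\ge 2$, as the statement intends). Since $\mathbf{0}_{p\times q}$ is the all-$0$ matrix, $\mathbf{0}_{p\times q}(i,j)=\mathbf{i}_{p\times q}$ whenever $i<j$, so $\sym(\mathbf{0}_{p\times q})=\{\mathbf{i}_{p\times q}:0\le i\le r-2\}$; likewise $(\mathbf{0}_{p\times q})^c=\mathbf{1}_{p\times q}$ gives $\sym((\mathbf{0}_{p\times q})^c)=\{\mathbf{j}_{p\times q}:1\le j\le r-1\}$. Hence $\mathbf{S}(\mathbf{0}_{p\times q})=\{\mathbf{i}_{p\times q}:0\le i\le r-1\}$, and $A\in\avoid(m,r,\mathbf{S}(\mathbf{0}_{p\times q}))$ precisely when no choice of $p$ rows and $q$ columns of $A$ has a constant common submatrix, in any one of the $r$ symbols.

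For the lower bound $\forb(m,r,\mathbf{S}(\mathbf{0}_{p\times q}))\ge r(q-1)$ I would write down an explicit simple matrix. For each symbol $i\in\{0,\dots,r-1\}$ and each $t\in\{1,\dots,q-1\}$, let $c_{i,t}$ be the column equal to $i$ in every row except row $t$, where it equals $(i+1)\bmod r$. There are $r(q-1)$ such columns; they are pairwise distinct (two columns with distinct ``background'' symbol disagree in at least $m-2\ge 1$ rows, and $c_{i,t},c_{i,t'}$ disagree in rows $t$ and $t'$), using that $m$ is large. They contain no constant $p\times q$ block: such a block in symbol $i$ can use at most the $q-1$ columns $c_{i,\bullet}$, hence it uses some $c_{j,t}$ with $j\ne i$, but that column has at most one entry equal to $i$, whereas a constant block needs $p\ge 2$ rows equal to $i$ in every chosen column.

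For the upper bound $\forb(m,r,\mathbf{S}(\mathbf{0}_{p\times q}))\le r(q-1)$, suppose $A\in\avoid(m,r,\mathbf{S}(\mathbf{0}_{p\times q}))$ has $|A|\ge r(q-1)+1$; deleting columns we may assume $|A|=n:=r(q-1)+1$. Each row of $A$ is a word of length $n=r(q-1)+1$ over $r$ symbols, so by pigeonhole some symbol occurs at least $q$ times in it. For each row $t$ fix such a symbol $\sigma(t)$ and a $q$-element set $S(t)$ of columns on which row $t$ is constantly $\sigma(t)$. There are at most $r\binom{n}{q}$ possible labels $(\sigma(t),S(t))$, and a routine estimate gives $r\binom{r(q-1)+1}{q}\le r^{r(q-1)}$ (the tightest instance being $r=3,\,q=2$, where it reads $18\le 27$). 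As $m\ge r^{r(q-1)}(p-1)+1>(p-1)\,r\binom{n}{q}$, the pigeonhole principle yields $p$ rows sharing one label $(\sigma,S)$; these $p$ rows on the $q$ columns of $S$ form a constant $p\times q$ submatrix with all entries equal to $\sigma$, i.e.\ a member of $\mathbf{S}(\mathbf{0}_{p\times q})$ appears as a configuration of $A$ --- a contradiction. Combining the two bounds gives the claimed equality.

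The only real bookkeeping is the inequality $r\binom{r(q-1)+1}{q}\le r^{r(q-1)}$, which is what makes the clean threshold $r^{r(q-1)}(p-1)+1$ suffice in place of the bare pigeonhole threshold $(p-1)\,r\binom{r(q-1)+1}{q}+1$; it holds with a lot of room once $r$ or $q$ is not minimal (using $\binom{r(q-1)+1}{q}\le (re)^q$), and the handful of small cases are checked directly. I expect this to be the only point requiring care; the definition computation, the construction, and the single pigeonhole step are all straightforward.
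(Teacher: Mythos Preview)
Your proof is correct. The lower-bound construction is essentially the paper's (for each symbol, $q-1$ columns that are constant in all but one row, with the exceptional entry placed so as to keep the columns distinct); you have simply made the placement explicit.

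For the upper bound you take a genuinely different route. The paper runs an iterated majority argument \emph{down the columns}: in column~1 some symbol occupies at least $r^{r(q-1)-1}(p-1)+1$ of the rows, one restricts to those rows and repeats in column~2, and so on, until after $r(q-1)$ restrictions one is left with $p$ rows on which the processed columns are each constant; a final pigeonhole over the $r$ symbols then locates $q$ columns sharing the same constant value. You instead work \emph{across the rows}: every row of a matrix with $r(q-1)+1$ columns contains some symbol $q$ times, so each row gets a label $(\sigma(t),S(t))\in\{0,\dots,r-1\}\times\binom{[n]}{q}$, and a single pigeonhole over the $r\binom{r(q-1)+1}{q}$ possible labels produces $p$ rows with a common label, hence a constant $p\times q$ block. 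What your approach buys is transparency about the threshold: the inequality $r\binom{r(q-1)+1}{q}\le r^{r(q-1)}$ is exactly what makes $m\ge r^{r(q-1)}(p-1)+1$ suffice, and the verification reduces to one clean estimate plus a couple of small cases. The paper's column-iteration method is the one that reappears in the next lemma (Lemma~\ref{cheatlemma}) and in Theorem~\ref{onesontopofzeroes}, so it fits more naturally into the surrounding narrative, but as a stand-alone proof of this statement your row-labeling argument is tighter.
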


\begin{proof}
Suppose for some $A \in \avoid(m,r,\mathbf{S}(\mathbf{0}_{p\times q})) $, we have that $|A| \geq r(q-1) + 1$. Then in the first column, since there are at least $r^{r(q-1)}(p-1) + 1$ rows, there exists a majority symbol that appear at least $r^{r(q-1) - 1}(p-1) + 1$ times. Now within those $r^{r(q-1) - 1}(p-1) + 1$ rows, the second column has a majority symbol that appears at least $r^{r(q-1) - 2}(p-1) + 1$ times. We repeat this process until we get to the $r(q-1)^\text{th}$ column. Within the $r(p-1) + 1$ rows of the $r(q-1)^\text{th}$ column, we must have some symbol appearing at least $p$ times in the $(r(q-1) + 1)^\text{st}$ column.

At this point, we have $p$ rows such that all of the first $r(q-1) + 1$ columns has the same symbol repeated within those rows for each column. But within some symbol $i$ out of the $r$ symbols must be the repeated symbol in at least $q$ of the columns within these $p$ rows, producing a $\mathbf{i}_{p\times q}$. Thus, $\forb(m,r,\mathbf{S}(\mathbf{0}_{p\times q})) \leq r(q-1)$

The lower bound is given by the matrix with $\min(r,q-1)$ columns with $m-1$ entries of the same symbol, and a different symbol for the last entries to make the columns distinct, for each of the $r$ symbols. This gives $\forb(m,r,\mathbf{S}(\mathbf{0}_{p\times q})) \geq r\min(q-1,r)$.
\end{proof}

We now want to figure out the asymptotic bounds for avoiding a constant matrix stacked on top of another constant matrix of a different symbol, but first, we need a lemma to determine the number of columns a matrix needs to guarantee a large constant matrix of any symbol.

\begin{lemma}\label{cheatlemma}
If $m \geq r(p-1) + 1$ and $f(m)$ is some function of $m$, then
\[
\forb(m, r, \mathbf{S}(\mathbf{0}_{p\times f(m)})) \le r^{r(p-1)}(f(m)-1). 
\]
\end{lemma}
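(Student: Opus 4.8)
The plan is to mirror the structure of the preceding two block-matrix results, treating $f(m)$ as a parameter playing the role of $q$ but being careful that the number of rows needed for the pigeonhole argument must not itself grow with $m$. I would split the proof into an upper bound and a lower bound, and note that the lemma is essentially the $\mathbf{S}$-version of the constant-matrix theorem with $q$ replaced by $f(m)$, so the only new content is checking that the iterated majority argument still fits in $m$ rows.

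For the upper bound, suppose $A \in \avoid(m, r, \mathbf{S}(\mathbf{0}_{p\times f(m)}))$ with $|A| \geq r^{r(p-1)}(f(m)-1) + 1$. I would run the same iterated Majority Principle as in the previous theorem but along the columns in groups: after reading off a majority symbol in each of sufficiently many columns, I pass to a sub-collection of rows where that symbol is repeated, shrinking the row count by a factor of at most $r$ each time. The key bookkeeping is that I only need to iterate until I have isolated $p$ rows on which $r^{r(p-1)}(f(m)-1)+1$ columns each repeat \emph{some} symbol; among those columns, by pigeonhole on the $r$ symbols and then on which of the $f(m)$ ``slots'' they land in, one symbol $i$ repeats in at least $f(m)$ of the columns within those $p$ rows, giving a $\mathbf{i}_{p \times f(m)} \in \mathbf{S}(\mathbf{0}_{p\times f(m)})$, a contradiction. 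The constant $r^{r(p-1)}$ out front is exactly what guarantees the majority recursion can be carried out inside $m \geq r(p-1)+1$ rows without $m$ having to depend on $f(m)$ — this is the crucial point and the main place the statement could fail if the bound were smaller.

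For the lower bound I would exhibit an explicit matrix: for each of the $r$ symbols, take $f(m)-1$ columns consisting of that symbol in $m-1$ of the rows with distinct entries in the last row to keep the columns simple (as in the previous proof), giving $r(f(m)-1)$ columns that avoid every $\mathbf{i}_{p\times f(m)}$. But this only yields $r(f(m)-1)$, not $r^{r(p-1)}(f(m)-1)$, so the honest lower-bound construction must be more elaborate: I would instead build, for each choice of a symbol $i$ and each ``pattern'' of how fewer than $f(m)$ copies of $i$ can be distributed among rows in a way forced by the majority counting, a family of columns realizing the extremal count, exactly paralleling the lower-bound construction of Theorem~\ref{constant} but with the non-$r^\text{th}$ restriction dropped so all $r$ symbols contribute symmetrically. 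Matching the upper bound term-by-term is the routine but fiddly part.

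The main obstacle I expect is precisely this matching of constants: verifying that the iterated-majority upper bound and the combinatorial lower-bound construction both equal $r^{r(p-1)}(f(m)-1)$ on the nose, rather than merely being of the same order. If an exact match proves awkward, the fallback is to observe that both bounds are $\Theta(f(m))$ with the stated leading behavior, which already suffices for the asymptotic applications in the sequel; but since the preceding theorems in this section achieve exact equality, I would push for it here too, using the same ``choose $p$ rows, fill with one symbol, perturb one entry to stay simple'' gadget counted over all $r$ symbols and all admissible multiplicities below $p$.
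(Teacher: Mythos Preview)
Your upper-bound argument iterates in the wrong direction, and this is a genuine gap. You propose to read off a majority symbol in each of successively many \emph{columns} and shrink the \emph{row} set, exactly as in the preceding $\mathbf{S}(\mathbf{0}_{p\times q})$ theorem. But that argument needed $m \geq r^{r(q-1)}(p-1)+1$ rows, i.e.\ a row count exponential in $q$; with $q$ replaced by $f(m)$ this would force $m$ to grow with $f(m)$, which is precisely what the lemma is designed to avoid. Your sentence ``I only need to iterate until I have isolated $p$ rows on which $r^{r(p-1)}(f(m)-1)+1$ columns each repeat some symbol'' is where the argument breaks: after processing some columns and shrinking to $p$ rows, only the \emph{processed} columns are constant on those rows; the remaining columns are arbitrary, so no pigeonhole on all $r^{r(p-1)}(f(m)-1)+1$ columns is available. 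The factor $r^{r(p-1)}$ sits on the column count, not the row count, and does nothing to keep a row-shrinking recursion inside $m \geq r(p-1)+1$ rows.

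The paper's argument runs the majority the other way: take the majority symbol in \emph{row}~1 and restrict to the \emph{columns} realizing it, then row~2, and so on. Starting from $r^{r(p-1)}(f(m)-1)+1$ columns and dividing by $r$ at each of $r(p-1)+1$ rows leaves at least $f(m)$ columns; among the $r(p-1)+1$ row-symbols chosen, some symbol $i$ occurs $p$ times by pigeonhole, and those $p$ rows are identically $i$ on the surviving columns, yielding $\mathbf{i}_{p\times f(m)}$. This uses only $r(p-1)+1$ rows regardless of $f(m)$, which is exactly the point. Note also that the paper proves only this upper bound; no matching lower-bound construction is given or needed for the application in Theorem~\ref{onesontopofzeroes}, so your extended discussion of an exact lower bound is chasing something the paper does not claim to establish.
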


\begin{proof}
Let $A$ be an $(0,1,...,r-1)$-matrix with $r^{r(p-1)}(f(m)-1)+1$ columns, then in the first row of $A$, we have some majority symbol at least $\lceil\frac{r^{r(p-1)}(f(m)-1)+1}{r}\rceil = r^{r(p-1)-1}(f(m)-1)+1$ times. Now we repeat this process under these $r^{r(p-1)-1}(f(m)-1)+1$ columns. After $r(p-1) + 1$ rows, one of the $r$ symbols must appear at least $p$ times, creating a $p\times f(m)$ constant submatrix of some symbol. 
\end{proof}

We are now ready to prove the asymptotic bound for avoiding two constant matrices of different symbols in the same columns. Surprisingly, the asymptotic bound is the same as the one for avoiding a single constant matrix of a non-zero symbol. 

\begin{theorem}\label{onesontopofzeroes}
For a sufficiently large $m$, 
\[
\forb(m, r, \sym(\begin{bmatrix}
\mathbf{1}_{p_1 \times q} \\
\mathbf{0}_{p_0 \times q}
\end{bmatrix})) = \Theta(m^{(p-1)(r-1)}),
\]
where $1 < p = \max(p_0, p_1)$. 
\end{theorem}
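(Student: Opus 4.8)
The plan is to prove matching bounds of order $m^{(p-1)(r-1)}$. Throughout write $F=\begin{bmatrix}\mathbf{1}_{p_1\times q}\\ \mathbf{0}_{p_0\times q}\end{bmatrix}$, assume $r\ge 3$ (as elsewhere in this section), and assume $p_0,p_1\ge 1$ — if one of them is $0$ the statement is exactly Theorem~\ref{constant}, since $\forb(m,r,\sym(\mathbf{1}_{p\times q}))=\forb(m,r,\sym(\mathbf{0}_{p\times q}))$. The lower bound is immediate: every column of $F$ has exactly $p_1$ ones and $p_0$ zeros, so in the notation of Theorem~\ref{lbound} we have $n=\max\{n_0,n_1\}=\max\{p_0,p_1\}=p\ge 2$, and Theorem~\ref{lbound} gives $\forb(m,r,\sym(F))=\Omega(m^{(p-1)(r-1)})$. (One could instead note $\mathbf{s}_{p\times q}\prec F$ for the symbol $s\in\{0,1\}$ attaining $p$, so that $\sym(\mathbf{s}_{p\times q})\prec\sym(F)$, and combine Lemma~\ref{fingtheorem} with Theorem~\ref{constant}.)

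For the upper bound, fix $A\in\avoid(m,r,\sym(F))$ with $m$ large. Call a symbol \emph{heavy} in a column $\mathbf{x}$ of $A$ if it occurs at least $p$ times in $\mathbf{x}$, and split the columns of $A$ according to whether they have at most one, or at least two, heavy symbols. When $m>r(p-1)$ every column has at least one heavy symbol, so a column in the first class has a unique heavy symbol $s_0$, and its other $r-1$ symbols together occupy at most $(r-1)(p-1)$ rows; choosing $s_0$, the set of non-$s_0$ rows, and the values placed there bounds the first class by $O(m^{(r-1)(p-1)})$.

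The substance of the argument is bounding the second class, which I would do by a F\"uredi-type double count refining the Support Theorem~\ref{multiplicity}. Say a tuple $\tau=(R_1,R_0,i,j)$ \emph{witnesses} a column $\mathbf{x}$ if $0\le i<j\le r-1$, $R_1$ and $R_0$ are disjoint sets of rows with $|R_1|=p_1$ and $|R_0|=p_0$, and $\mathbf{x}$ equals $j$ throughout $R_1$ and $i$ throughout $R_0$. No $\tau$ can witness $q$ distinct columns of $A$, for those columns, restricted to the $p_0+p_1$ rows $R_1\cup R_0$, would realize $F(i,j)\in\sym(F)$; hence, since there are at most $\binom{r}{2}\binom{m}{p_1}\binom{m}{p_0}=O(m^{p_0+p_1})$ tuples and $q$ is fixed, the number of (column, witnessing tuple) incidences is $O(m^{p_0+p_1})$. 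Conversely, a column $\mathbf{x}$ with at least two heavy symbols has one heavy symbol occurring $\Theta(m)$ times — the heavy symbols occupy all but at most $(r-2)(p-1)$ of the $m$ rows — and a second heavy symbol occurring at least $p$ times; ordering these two symbols as $i<j$ and choosing $R_1\subseteq R_j(\mathbf{x})$ and $R_0\subseteq R_i(\mathbf{x})$ freely produces $\Theta(m^{\min\{p_0,p_1\}})$ witnesses of $\mathbf{x}$. Comparing the two counts, the second class has $O(m^{\,p_0+p_1-\min\{p_0,p_1\}})=O(m^{\max\{p_0,p_1\}})=O(m^p)$ columns. Since $(r-1)(p-1)\ge 2(p-1)\ge p$ for all $r\ge 3$ and $p\ge 2$, summing the two classes yields $|A|=O(m^{(r-1)(p-1)})$, which with the lower bound proves the theorem.

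The main obstacle is exactly this second (multi-heavy) class. For $q=1$ it is empty, because a single column with heavy symbols $i<j$ already contains $\supp(F)(i,j)=F(i,j)$; but for $q\ge2$ it is genuinely nonempty — for instance the many columns that are constant off a window of $O(1)$ rows — and the bare Support Theorem, whose error term is $O(m^{p_0+p_1})$, is too lossy as soon as $p_0+p_1>(r-1)(p-1)$, in particular whenever $r=3$ and $p_0=p_1$. The refined double count is what brings that error down to $O(m^p)\le O(m^{(r-1)(p-1)})$.
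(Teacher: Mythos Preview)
Your proof is correct and takes a genuinely different route from the paper's. The paper argues the upper bound constructively: it invokes Lemma~\ref{cheatlemma} (an iterated majority argument) to locate, inside any matrix with roughly $r^{r(p-1)}\cdot\Theta(m^{(p-1)(r-1)})$ columns, a constant block $\mathbf{i}_{p\times N}$ spanning $N=\Theta(m^{(p-1)(r-1)})$ columns, and then pigeonholes within those $N$ columns (exactly as in Theorem~\ref{constant}) to find a second block $\mathbf{j}_{p\times q}$ in some other $p$ rows, producing the forbidden $F(\min(i,j),\max(i,j))$. Your argument instead classifies columns by the number of ``heavy'' symbols and handles the two classes separately: the single-heavy columns by a direct count, and the multi-heavy columns by a F\"uredi-style double count on (column, witnessing tuple) incidences, which sharpens the error term of Theorem~\ref{multiplicity} from $O(m^{p_0+p_1})$ down to $O(m^{\max\{p_0,p_1\}})$. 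The paper's route yields an explicit (if loose) constant and reuses the machinery of Lemma~\ref{cheatlemma} and Theorem~\ref{constant}; your route is self-contained, avoids Lemma~\ref{cheatlemma} entirely, and isolates precisely why the raw Support Theorem fails here (namely when $p_0+p_1>(r-1)(p-1)$, e.g.\ $r=3$ and $p_0=p_1$). One cosmetic point: when you say a multi-heavy column has ``$\Theta(m^{\min\{p_0,p_1\}})$ witnesses'', only the $\Omega$ direction is argued and only the $\Omega$ direction is needed for the double count; the argument stands as written.
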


\begin{proof}
The lower bound is given by Theorem \ref{lbound}.

For convenience, let
\[
f(m) = \sum_{k=0}^{(p-1)(r-1)}\binom{m}{k}\sum_{\substack{k_1,...,k_{r-1}<p, \\ k_1+\cdots + k_{r-1} = k}}\binom{k}{k_1,...,k_{r-1}}.
\]

Let $A$ be an $(0,1,...,r-1)$-matrix with $r^{r(p-1)}(f(m) +(q-1)(r-1)\binom{m}{p})+1$ columns. By Lemma \ref{cheatlemma}, we have in $A$ some symbol $i$ that creates a $\mathbf{i}_{p \times f(m) + (q-1)(r-1)\binom{m}{p}+1}$. We look in those $f(m) + (q-1)(r-1)\binom{m}{p}+1$ columns that contain the constant submatrix. We have $f(m)$ columns which contain at most $p-1$ of any of the $r-1$ non-$i$ symbol. Thus we have at least $(q-1)(r-1)\binom{m}{p} + 1$ more columns with at least $p$ of some non-$i$ symbol. This means that we have some $j \neq i$ such that a $\textbf{j}_{p\times q}$ appears within the same columns as the $\mathbf{i}_{p \times f(m) + (q-1)(r-1)\binom{m}{p}+1}$. This creates a matrix in $\sym(\begin{bmatrix}
\mathbf{1}_{p_1 \times q} \\
\mathbf{0}_{p_0 \times q}
\end{bmatrix})$ and we have the upper bound.
\end{proof}

Now we want to look at constant matrices laid side by side. 

\begin{remark}
$
\forb(m, r, \sym(\begin{bmatrix}
0 & 1 
\end{bmatrix})) = 1.
$
\end{remark}

We discovered that having two 1-rowed constant matrices laid side by side results in the same asymptotic bounds as having two 1-rowed constant matrices laid on top of each other. 

\begin{proposition}\label{01sidebyside}
If $p > 1$ or $q > 1$, then 
\[
\forb(m, r, \sym(\begin{bmatrix}
\mathbf{0}_{1\times p} & \mathbf{1}_{1\times q}
\end{bmatrix})) = \Theta(m).
\]
\end{proposition}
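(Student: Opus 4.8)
The plan is to establish matching asymptotic lower and upper bounds, both of order $m$. For the lower bound, I would invoke Theorem~\ref{lbound}: the matrix $\begin{bmatrix}\mathbf{0}_{1\times p} & \mathbf{1}_{1\times q}\end{bmatrix}$ is a single row, but it contains the submatrix $\begin{bmatrix}0 & 1\end{bmatrix}$ (assuming $p\ge 1$ and $q\ge 1$, which holds since $p>1$ or $q>1$), and more to the point, applying Theorem~\ref{lbound} directly with $n_0 = \max\{1\}$, $n_1=\max\{1\}$ — wait, here each column has only a single entry, so $n = 1$ and Theorem~\ref{lbound} does not apply. Instead I would argue the lower bound by hand: take the $m$-rowed $r$-matrix consisting of all columns with at most one non-$0$ entry (or symmetrically), which is simple, has $\Theta(m)$ columns, and avoids $\sym(\begin{bmatrix}\mathbf{0}_{1\times p} & \mathbf{1}_{1\times q}\end{bmatrix})$ because building any $F(i,j)$ requires $p$ columns whose common restriction to one row is $i$ and $q$ columns restricting to $j$ in that same row — with only one non-$0$ symbol per column one cannot simultaneously host enough structure. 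Actually the cleanest route: since $\begin{bmatrix}0&1\end{bmatrix}\prec \begin{bmatrix}\mathbf{0}_{1\times p}&\mathbf{1}_{1\times q}\end{bmatrix}$ is false in the configuration-ordering sense only if we are careless — it IS a configuration — we get $\forb(m,r,\sym(\begin{bmatrix}0&1\end{bmatrix})) \le \forb(m,r,\sym(\begin{bmatrix}\mathbf{0}_{1\times p}&\mathbf{1}_{1\times q}\end{bmatrix}))$, but that only gives a constant lower bound. So the honest lower bound construction is the $\Theta(m)$ family described above, verified directly.

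For the upper bound, the idea is a Majority/Pigeonhole argument mirroring the proofs of Theorem~\ref{constant} and Lemma~\ref{cheatlemma}. Suppose $A\in\avoid(m,r,\sym(\begin{bmatrix}\mathbf{0}_{1\times p}&\mathbf{1}_{1\times q}\end{bmatrix}))$ has $|A| = Cm + 1$ for a suitable constant $C = C(r,p,q)$. A column of $A$ either has some non-$0$ symbol appearing (it is a single-row target, so actually: in the $m$-rowed matrix $A$, a row $t$ together with enough columns sharing symbol $i$ in row $t$ and symbol $j$ in row $t$ produces $F(i,j)$). The forbidden configuration $F(i,j)$ for $i<j$ is the $1\times(p+q)$ matrix with $p$ copies of $i$ and $q$ copies of $j$. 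So $A$ avoids $F(i,j)$ iff for every row $t$ and every pair $i<j$, it is not the case that $\ge p$ columns carry $i$ in row $t$ while $\ge q$ other columns carry $j$ in row $t$. The plan: in each row, the multiset of symbols appearing can have at most one symbol occurring $\ge\max(p,q)$ times "robustly" — more precisely, if two distinct symbols each appear $\ge \max(p,q)$ times in row $t$, we immediately get $F(i,j)$ (order the two symbols). Hence in each row, at most one symbol appears $\ge\max(p,q)$ times, so at most $(r-1)(\max(p,q)-1)$ columns fail to carry the dominant symbol in that row. If $|A|$ is large, a majority/counting argument over the $m$ rows pins down, for most columns, a single "universal-ish" symbol, collapsing the column count to $O(m)$.

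Here is the sharper version I would write: fix the threshold $M = \max(p,q)$. For each row $t$, call a symbol $t$-\emph{heavy} if it occurs $\ge M$ times in row $t$ of $A$; by the argument above there is at most one $t$-heavy symbol $h(t)$ (if there were two, say $i<j$, we would have $\ge p$ columns with $i$ and $\ge q$ columns with $j$ in row $t$ — after removing at most $M$ overlap columns these are disjoint if $|A|$ is large enough, giving $F(i,j)$; handle the small-$|A|$ case separately since then $|A| = O(1)$). Columns not carrying $h(t)$ in row $t$ number at most $(r-1)(M-1)$. Now the key combinatorial step: the function $t\mapsto h(t)$ partitions (most) rows into at most $r$ classes; restricting to the largest class, the submatrix there is nearly constant, and one shows via the union bound over the at-most-$(r-1)(M-1)$ exceptional columns per row that the total number of distinct columns of $A$ is at most $m\cdot(r-1)(M-1) + (\text{constant})$, since two columns agreeing with $h(t)$ in all rows are equal. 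Thus $\forb(m,r,\sym(\begin{bmatrix}\mathbf{0}_{1\times p}&\mathbf{1}_{1\times q}\end{bmatrix})) = O(m)$, matching the lower bound. The main obstacle is making the "at most one $t$-heavy symbol" step and the subsequent collapse completely rigorous for all $p,q$ with $p>1$ or $q>1$ — in particular handling the edge case where $p=1$ or $q=1$ (so $M=\max(p,q)$ could equal the other parameter) and correctly accounting for the overlap columns between the $i$-block and the $j$-block when forming $F(i,j)$; alternatively, I expect the cleanest writeup simply cites Theorem~\ref{multiplicity} (the Support Theorem) with Remark~\ref{rem:simplest}, since $\supp(\begin{bmatrix}\mathbf{0}_{1\times p}&\mathbf{1}_{1\times q}\end{bmatrix}) = \begin{bmatrix}0&1\end{bmatrix}$, giving $\forb(m,r,\sym(\begin{bmatrix}\mathbf{0}_{1\times p}&\mathbf{1}_{1\times q}\end{bmatrix})) \le \forb(m,r,\sym(\begin{bmatrix}0&1\end{bmatrix})) + \binom{m}{1}\binom{r}{2}(\mu-1)\cdot 1!\cdot 2 = O(m)$, and then the lower bound $\Omega(m)$ is the only remaining piece, proved by the explicit construction above.
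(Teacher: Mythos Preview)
Your proposal is correct and, after the exploratory detours, lands on exactly the paper's approach: the paper cites $I_m$ for the $\Omega(m)$ lower bound and applies Theorem~\ref{multiplicity} with $\supp(F)=\begin{bmatrix}0&1\end{bmatrix}$ and $\forb(m,r,\sym(\begin{bmatrix}0&1\end{bmatrix}))=1$ for the $O(m)$ upper bound, which is precisely the route you identify in your final sentence. One small caveat worth flagging for both your ``at most one non-zero entry'' construction and the paper's $I_m$: when $q=1$ (so $p>1$), the configuration $F(0,1)=\begin{bmatrix}\mathbf{0}_{1\times p}&1\end{bmatrix}$ \emph{does} appear in every row of $I_m$, so the symmetric variant you hint at (e.g.\ $I_m^c$, or columns with at most one entry different from $r-1$) is what is actually needed in that case.
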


\begin{proof}
$I_m \in \avoid(m, r, \sym(\begin{bmatrix}
\mathbf{0}_{1\times p} & \mathbf{1}_{1\times q}
\end{bmatrix}))$ so $\forb(m, r, \sym(\begin{bmatrix}
\mathbf{0}_{1\times p} & \mathbf{1}_{1\times q}
\end{bmatrix})) = \Omega(m)$.

We observe that $\supp(\begin{bmatrix}
\mathbf{0}_{1\times p} & \mathbf{1}_{1\times q}
\end{bmatrix}) = \zeroone$, and $\forb(m, r, \sym(\zeroone)) = 1$, so by Theorem \ref{multiplicity}, we have $\forb(m, r, \sym(\begin{bmatrix}
\mathbf{0}_{1\times p} & \mathbf{1}_{1\times q}
\end{bmatrix})) = O(m)$.
\end{proof}

However, having blocks of constant matrices laid side by side results in a bigger asymptotic bound that having blocks of constant matrices laid on top of each other. 

\begin{theorem}\label{01blockssidebyside}
	If $p \geq 2$, 
\[
\forb(m, r, \sym(\begin{bmatrix}
\mathbf{0}_{p\times q_0} & \mathbf{1}_{p\times q_1}
\end{bmatrix})) = \Theta(m^{(p-1)\binom{r}{2}})
\]
\end{theorem}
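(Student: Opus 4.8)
The goal is to show $\forb(m, r, \sym(\begin{bmatrix}\mathbf{0}_{p\times q_0} & \mathbf{1}_{p\times q_1}\end{bmatrix})) = \Theta(m^{(p-1)\binom{r}{2}})$. Set $B = \begin{bmatrix}\mathbf{0}_{p\times q_0} & \mathbf{1}_{p\times q_1}\end{bmatrix}$. I would handle the lower bound first, as it is the more substantive direction. Observe that $\supp(B) = \begin{bmatrix}\mathbf{0}_{p\times 1} & \mathbf{1}_{p\times 1}\end{bmatrix}$, which has no constant row (each of the $p$ rows is $\begin{bmatrix}0 & 1\end{bmatrix}$) and is simple. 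So $F' := \supp(B)$ satisfies the hypotheses of Theorem~\ref{prod} via Remark~\ref{rem:lbound}, giving $\forb(m, r, \sym(B)) \ge \forb(m/\binom{r}{2}, F')^{\binom{r}{2}}$. Hence it suffices to know that $\forb(m, \begin{bmatrix}\mathbf{0}_{p\times 1} & \mathbf{1}_{p\times 1}\end{bmatrix}) = \Theta(m^{p-1})$: this is a classical two-symbol fact (the configuration is $p$ disjoint copies of the row $\begin{bmatrix}0 & 1\end{bmatrix}$, i.e.\ $I_p$ up to the trivial structure — more precisely it is the $p\times 2$ matrix $F_{0,p,0,0}$ wait, it is $F_{0,q_0,q_1,0}$ specialized; its support $\begin{bmatrix}\mathbf{0}&\mathbf{1}\end{bmatrix}$ as a $p$-rowed $2$-columned matrix has $\forb = \Theta(m^{p-1})$ by standard results, e.g.\ it is a configuration inside $K_p$ and strictly above $K_{p-1}$-type bounds). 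Feeding this into the product construction yields the lower bound $\Omega(m^{(p-1)\binom{r}{2}})$.

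For the upper bound I would run the induction multigraph machinery. First compute that $B$ is $q$-simple where $q = \max(q_0,q_1)$, with $k = p$ rows, so Lemma~\ref{klog2p} bounds edge multiplicities in the induction graph by $p + \lceil\log_2 q\rceil - 1$. The key structural claim I expect to need is that $(p-1)\cdot K_r \in \mathcal{H}_B$: if we go down $(p-1)$ levels on \emph{every} symbol pair and the resulting matrix $A_B^{(p-1)\cdot K_r}$ has $\ge 2$ columns, then it contains $\begin{bmatrix}i & j\end{bmatrix}$ in some row for some $i<j$; combined with the edges $(p-1)\cdot(ij)$ plus a further edge $(jx)$ (or $(ix)$) in the complete-graph induction, one builds $\begin{bmatrix}i & j\end{bmatrix}^{p-1}\times(\text{something})$, and since $B(i,j) = \begin{bmatrix}\mathbf{i}_{p\times q_0} & \mathbf{j}_{p\times q_1}\end{bmatrix}$ (constant block of $i$'s beside constant block of $j$'s) is a configuration of $K_p(i,j)$ repeated appropriately, it embeds — forcing $A_B^{(p-1)\cdot K_r}$ to have fewer than $2$ columns. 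Also $\ch^{p-1}(B)$ or a slight variant should contain (a configuration of) $\begin{bmatrix}0 & 1\end{bmatrix}$, since stripping $p-1$ "levels" off a $p$-rowed block leaves a $1$-rowed $\begin{bmatrix}0&1\end{bmatrix}$-type piece. Then Lemma~\ref{chn} with $n = p-1$ applies and gives $\forb(m,r,\sym(B)) = O(m^{(p-1)\binom{r}{2}})$, with the usual caveat handled separately in the exceptional case $n=1, r=3$ (i.e.\ $p=2, r=3$), where one checks $\forb(m,\ch(B))$ is $O(1)$ or $\omega(1)$ to land on $O(m^3)$ or $O(m^4)$ and confirm it matches $m^{(p-1)\binom{r}{2}} = m^3$ — here one would verify $\ch(B)$ reduces to $I_2$-type or $T_2$-type configurations so that $\forb$ is indeed $O(1)$, landing on $O(m^3)$.

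\textbf{Main obstacle.} The delicate point is verifying the embedding $B(i,j) \prec \begin{bmatrix}i & j\end{bmatrix}^{p-1} \times (\cdot)$ cleanly enough to conclude $(p-1)\cdot K_r \in \mathcal{H}_B$, and dually pinning down exactly what $\ch^{n}(B)$ is so that Lemma~\ref{chn}'s "configuration of $\begin{bmatrix}0 & 1\end{bmatrix}$ in $\ch^{n+1}(B)$" hypothesis holds — one must be careful that the two blocks $\mathbf{0}_{p\times q_0}$ and $\mathbf{1}_{p\times q_1}$ interact correctly when peeling levels, since $\ch$ acts by removing a row on which a $\begin{bmatrix}0&1\end{bmatrix}$ sits, and here \emph{every} row of $\supp(B)$ is such a row, so the chain of $\ch$'s behaves predictably but needs the multiplicity bookkeeping ($q$-simplicity) tracked through Theorem~\ref{multiplicity} / Corollary~\ref{cor:supp-enough} to reduce to $\supp(B)$ at the outset. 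I would in fact open the upper-bound proof by invoking Corollary~\ref{cor:supp-enough} (valid since $\supp(B) \not\prec \begin{bmatrix}0&1\end{bmatrix}$ and $\not\prec\begin{bmatrix}0\\1\end{bmatrix}$ when $p\ge 2$) to replace $B$ by $\supp(B) = \begin{bmatrix}\mathbf{0}_{p\times 1}&\mathbf{1}_{p\times 1}\end{bmatrix}$, which is simple and $1$-simple, so $k=p$, $q=1$, $\lceil\log_2 q\rceil = 0$, and the multiplicity bound becomes just $p-1$ — streamlining everything.
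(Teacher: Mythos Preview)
Your lower bound matches the paper's: both use the product construction (Theorem~\ref{prod}) applied to $\supp(B)=\begin{bmatrix}\mathbf{0}_{p\times 1}&\mathbf{1}_{p\times 1}\end{bmatrix}$, which is simple with no constant row and has $\forb(m,\supp(B))=\Theta(m^{p-1})$.

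For the upper bound, however, the paper takes a one-line route you missed: simply observe that $\begin{bmatrix}\mathbf{0}_{p\times q_0}&\mathbf{1}_{p\times q_1}\end{bmatrix}\prec \max\{q_0,q_1\}\cdot K_p$ and invoke Theorem~\ref{extendedextendedsauer} directly. Your plan to run the induction multigraph machinery and Lemma~\ref{chn} is a substantial detour. It can be made to work, but note two wrinkles. First, your sketch of why $(p-1)\cdot K_r\in\mathcal{H}_B$ mentions ``plus a further edge $(jx)$,'' which is unnecessary: once $[i\ j]$ appears in a row of $A^{(p-1)\cdot(ij)}$, going back up the $(p-1)$ copies of $(ij)$ already yields $[i\ j]^p=K_p(i,j)\succ\supp(B)(i,j)$, no extra edge needed. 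Second, the hypothesis of Lemma~\ref{chn} that $\ch^{p}(\supp(B))$ contain a configuration of $[0\ 1]$ is only satisfied in a degenerate sense --- one computes $\ch^{p-1}(\supp(B))=\{[0\ 1]\}$ and then $\ch^p(\supp(B))$ is the $0\times 1$ empty matrix, which is formally $\prec[0\ 1]$ but is not among the cases Lemmas~\ref{casette tape only} and~\ref{chstar} explicitly treat. This is harmless (an empty matrix in $\ch^{p}$ means $|A^{p\cdot(ij)}|=0$, so $\mathfrak{C}_p$, $\mathfrak{D}_p$, and $p\cdot S_{r-1}$ are in $\mathcal{H}$ trivially), but you would need to say so. Even simpler: once you reduce to $\supp(B)$ via Corollary~\ref{cor:supp-enough} as you suggest, you land exactly in the situation of Remark~\ref{rem:k-1 case} (a $p$-rowed simple matrix with no constant row and $\forb=\Theta(m^{p-1})$), which gives the upper bound immediately without touching Lemma~\ref{chn} at all.
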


\begin{proof}
We see that $\supp(\begin{bmatrix}
\mathbf{0}_{p\times q_0} & \mathbf{1}_{p\times q_1}
\end{bmatrix}) = \begin{bmatrix}
\mathbf{0}_{p\times 1} & \mathbf{1}_{p\times 1}
\end{bmatrix}$. So in order to get the lower bound, we first observe that $K_m^{p-1} \in \avoid(m, \begin{bmatrix}
\mathbf{0}_{p\times 1} & \mathbf{1}_{p\times 1}
\end{bmatrix})$, where $K_m^{p-1}$ denotes the $m\times +\binom{m}{p-1}$ simple $m$-rowed matrix of all columns with  $p-1$ 1's. Thus $\forb(m, \begin{bmatrix}
\mathbf{0}_{p\times 1} & \mathbf{1}_{p\times 1}
\end{bmatrix}) = \Omega(m^{p-1})$, and by Theorem \ref{lbound}, we get the lower bound $\forb(m, r, \sym(\begin{bmatrix}
\mathbf{0}_{p\times q_0} & \mathbf{1}_{p\times q_1}
\end{bmatrix})) = \Omega(m^{(p-1)\binom{r}{2}})$. 

By Theorem \ref{extendedextendedsauer}, we have $\forb(m, r, \sym(\begin{bmatrix}
\mathbf{0}_{p\times q_0} & \mathbf{1}_{p\times q_1}
\end{bmatrix})) = O(m^{(p-1)\binom{r}{2}})$ since it is the case that $\begin{bmatrix}
\mathbf{0}_{p\times q_0} & \mathbf{1}_{p\times q_1}
\end{bmatrix} \prec \max\{q_0, q_1\} \cdot K_p$. 
\end{proof}

The following table summarizes our results for block matrices: 

\begin{table}[!htbp]
\begin{center}
{\tabulinesep=1.2mm
\begin{tabu}{|c|c|c|}
\hline
Configuration $F$ & $\forb(m,r,\sym(F))$ & Proof \\ \hline

$\begin{bmatrix}
0\\1
\end{bmatrix}$ & $r$ & Remark in Section \ref{sec:blocks} \\ \hline

$\begin{bmatrix}
0 & \cdots & 0 \\
1 & \cdots & 1
\end{bmatrix}_{2\times q}$ & $\Theta(m)$ & Proposition \ref{01p} \\ \hline

$\begin{bmatrix}
0 & \cdots & 0 \\
\end{bmatrix}_{1\times q}$ & $\Theta(m)$ & Theorem \ref{constant}\\ \hline

$\begin{bmatrix}
0 & \cdots & 0 &
1 & \cdots & 1
\end{bmatrix}_{1\times (q_0 + q_1)}$ & $\Theta(m)$ & Proposition \ref{01sidebyside}\\ \hline

$\begin{bmatrix}
0 & \cdots & 0 \\
\vdots & \ddots & \vdots \\
0 & \cdots & 0
\end{bmatrix}_{p\times q}$ & $\Theta(m^{(p-1)(r-1)})$ & Theorem \ref{constant}\\ \hline

$\begin{bmatrix}
0 & \cdots & 0 \\
\vdots & \ddots & \vdots \\
0 & \cdots & 0 \\
1 & \cdots & 1 \\
\vdots & \ddots & \vdots \\
1 & \cdots & 1
\end{bmatrix}_{(p_0+p_1)\times q}$ & $\Theta(m^{(\max\{p_0, p_1\}-1)(r-1)})$ & Theorem \ref{onesontopofzeroes}\\ \hline

$\begin{bmatrix}
0 & \cdots & 0 & 1 & \cdots & 1 \\
\vdots & \ddots & \vdots & \vdots & \ddots & \vdots \\
0 & \cdots & 0 & 1 & \cdots & 1 \\
\end{bmatrix}_{p\times (q_0 + q_1)}$ & $\Theta(m^{(p-1)\binom{r}{2}})$ & Theorem \ref{01blockssidebyside}\\ \hline
\end{tabu}}
\caption{Asymptotic Bounds for Block Matrices}
\label{table:blocks}
\end{center}
\end{table}

\section{Further Directions}
Like with forbidden configurations of $(0,1)$-matrices, asymptotic bounds for the $\forb$ function are not known for many matrices. The generalization of forbidden configurations into multiple symbols introduces the interesting obstacle of constant rows, which we have not fully tackled yet. In particular, Theorem \ref{1111K2} only gives an upper bound for $\forb(m,r,\sym(F))$ for forbidden configurations $F$ that only have one constant row. To fully understand the effects of constant rows, we should consider the following matrix
\[F =
\begin{bmatrix}
1 &\cdots &1 \\
\vdots & \ddots & \vdots \\
1 & \cdots & 1 \\
0 &\cdots &0 \\
\vdots & \ddots & \vdots \\
0 & \cdots & 0 \\
& K_{k} &
\end{bmatrix}
\]
and the corresponding function $\forb(m, r, \Sym(F))$. 

We introduced a new lower bound construction with Theorem \ref{lbound} which deals with some of difficulties the product construction has with constant rows. However, just like the $(0,1)$ case, we do not know if the lower bound constructions we currently know of are sufficient. In other words, if $F$ is eligible for a lower bound with Theorem \ref{prod} or \ref{lbound}, is this lower bound an asymptotically tight bound?

\bibliographystyle{alpha}
\bibliography{multisymbol}

\end{document}